\let\oldmathbf\mathbf
\renewcommand{\mathbf}[1]{\boldsymbol{\oldmathbf{#1}}}
\newtheorem{theorem}{Theorem}
\newtheorem*{theorem*}{Theorem}
\newtheorem{lemma}[theorem]{Lemma}
\newtheorem{proposition}[theorem]{Proposition}
\theoremstyle{definition}
\newtheorem{remark}[theorem]{Remark}
\newtheorem{convention}[theorem]{Convention}
\title[On a discrete approach to lower bounds in discrepancy theory]{On a discrete approach to lower \\ bounds in discrepancy theory}
\author[L. Brandolini]{Luca Brandolini \orcidlink{0000-0002-9670-9051}}
\address{Dipartimento di Ingegneria Gestionale, dell'Informazione e
della Produzione, Universit{\`a} degli Studi di Bergamo, Viale G. Marconi 5, 24044,
Dalmine BG, Italy}
\email{luca.brandolini@unibg.it}
\email{biancamaria.gariboldi@unibg.it}
\email{giacomo.gigante@unibg.it}
\email{alessandro.monguzzi@unibg.it}
\author[B. Gariboldi]{Bianca Gariboldi \orcidlink{0000-0001-8714-4135}}
\author[G. Gigante]{Giacomo Gigante \orcidlink{0000-0002-1642-679X}}
\author[A. Monguzzi]{Alessandro Monguzzi \orcidlink{0000-0003-3233-5000}} 
\thanks{MSC 2020: 11K38; 39A12; 42A38}
\thanks{This work was supported by the European Union – NextGenerationEU, under the National Recovery and Resilience Plan (NRRP), Mission 4, Component 2, Investment 1.1, funding call PRIN 2022 D.D. 104 published on 2.2.2022 by the Italian Ministry of University and Research (Ministero dell’Università e della Ricerca), Project Title:  TIGRECO - TIme-varying signals on Graphs: REal and COmplex methods – CUP F53D23002630001. The authors are also supported by the Indam--Gnampa project CUP\textunderscore E53C23001670001.}
\keywords{Discrepancy, Lower bounds, Irregularities of distribution, Discrete Fourier transform}
\begin{document}
\begin{abstract}
In this paper, we prove that some renowned lower bounds in discrepancy theory admit a discrete analogue. Namely, we prove that the lower bound of the discrepancy for corners in the unit cube due to Roth holds true also for a suitable finite family of corners. We also prove two analogous results for the  discrepancy on the torus with respect to squares and balls. 
\end{abstract}
\maketitle

\section{Introduction}
The study of irregularities of distribution of point sequences consists in quantifying how far any such sequence necessarily is from being too evenly distributed. There exist several different ways to measure irregularities of distribution, here we will
focus on the discrepancy. In a few words, given a finite sequence of points $\mathcal P_N=\{p_n\}_{n=1}^N$ in a probability space $(X,\mu)$ and a collection $\mathcal A$ of measurable subsets of $X$, the discrepancy function maps any set $A\in\mathcal A$ to the difference between the actual and the expected number of points of the sequence $\mathcal P_N$ that belong to $A$, that is
\[
\mathscr D(A,\mathcal P_N)=\sum_{n=1}^N\chi_A (p_n)-N\mu(A).
\]
One then estimates some norm of this function, like $\sup_{A\in\mathcal A}|\mathcal D(A,\mathcal P_N)|$ or some other $L^p$ norm with respect to certain parameters describing the collection $\mathcal A$. 

For example, in the most classical case one considers $X=\left[  0,1\right)  ^{d}$ with the Lebesgue measure and $\mathcal A=\{A(x)=[  0,x_{1})  \times\cdots\times[
0,x_{d}):x=(x_1,\ldots,x_d)\in (0,1]^d\},
$ the set of anchored boxes (or ``corners''),
so that
\begin{equation*}
\mathscr{D}(  A(x),\mathcal{P}_{N})  
=\sum_{n=1}^{N}\chi_{A(  x)
}(  p_{n})  -Nx_{1}\cdots x_{d}.
\end{equation*}
Roth (\cite{Roth}) proved the existence of a constant $c>0$ independent of $N$ and  $\mathcal{P}_N$ such that
\begin{equation}
\left(\int_{(  0,1]  ^{d}}\left\vert \mathscr{D}(  A(x),\mathcal{P}_{N})
\right\vert ^{2}dx\right)^{\frac{1}{2}}\geqslant c\log^{\frac{d-1}{2}}(  N)  .\label{Roth}%
\end{equation}
In particular, this immediately implies that
\begin{equation}\label{Roth-L-infty}
\sup_{x\in (0,1]^d}|\mathscr{D}(A(x),\mathcal P_N)|\geqslant c\log^{\frac{d-1}{2}}(N).
\end{equation}
Estimate \eqref{Roth} was later extended by Schmidt in \cite{Schmidt77} to any $L^p$ norm, with $p\in(1,\infty)$. Moreover, such estimate is known to be sharp; see \cite[Section 4.1]{Bilyk2} and the references therein. The endpoint situation $p=\infty$, on the contrary, is not completely understood.  In dimension $d=2$, Schmidt \cite{Schmidt72} improved \eqref{Roth-L-infty} proving the sharp estimate
\begin{equation*}
\sup_{x\in (0,1]^2}|\mathscr{D}(A(x),\mathcal P_N)|\geqslant c\log N.
\end{equation*}
A deep and profound problem is how one such sharp estimate should look like in dimension $d\geqslant 3$, but it is beyond the scope of this work to discuss such a problem. Thus, we only mention here the papers \cite{BL, BLV} and refer the reader to \cite[Section 2.1.1]{Bilyk}  for a detailed discussion on this matter and its connection with other conjectures in analysis.

 The literature is rich of excellent sources which provide an overview of the development of geometric discrepancy in the wake of Roth's result and ideas. Among others, we point out \cite{BC,M, CT, Bilyk, Bilyk2}.

A different analysis consists in replacing the set $\mathcal A$ of anchored boxes with the set of all  cubes, or with the set of balls with fixed radius $r<1/2$. More precisely, we shall consider here the $\mathbb Z^d$-periodization of the cubes (or the balls), in such a way that the points of the sequence near the boundary of the unit cube give the same contribution to the discrepancy function as those near the center. In this case the term toroidal discrepancy is often used, as this indeed corresponds to replacing the unit cube $[0,1)^d$ with the $d$-dimensional torus, thus setting $X=\mathbb T^d=\mathbb R^d/\mathbb Z^d$. 

Let us consider cubes first. In this case, the collection $\mathcal A$ is then
\[
\mathcal A=\{x+Q(s):x\in\mathbb T^d,\,s\in(0,1/2)\},
\]
where $Q(s)=[-s,s)^d+\mathbb Z^d$ is the cube of center $0$ and edge $2s$ in the torus, so that 
\[
\mathscr{D}(x+Q(s),\mathcal{P}_N)=
\sum_{n=1}^N\chi_{x+Q(s)}(p_n)-N(2s)^d.
\] 
A classical result due to Hal\'asz \cite{Hal} says that there exists a constant $c$ such that for all integers $N$ and for all sequences $\mathcal P_N=\{p_n\}_{n=1}^N$ in $\mathbb T^d$, 
\[
\left(\int_{0}^{\frac12} \int_{\mathbb T^d} |\mathscr{D}(x+Q(s),\mathcal{P}_N)|^2 dx ds \right)^\frac12\geqslant c\log^{\frac{d-1}{2}}(  N).
\]
See also \cite[Chapter 6]{mont}, \cite{BC} or \cite{M}. It has been proved in \cite{Drmota1996} that the $L^2$ norm of the discrepancy for anchored boxes and of the discrepancy for cubes are asymptotically equivalent, as $N\to+\infty$ (see also \cite{Ruzsa92} for the equivalence of $L^\infty$ norms in the $2$-dimensional case). It then follows that the above considerations on the discrepancy with respect to anchored boxes, such as the sharpness of the $L^2$ estimate, apply to the discrepancy with respect to periodized cubes too.

In the case of balls, the collection $\mathcal A$ is 
\[
\mathcal A=\{x+B_r:x\in\mathbb T^d\}
\]
where $B_r=\{x\in\mathbb R^d:|x|<r\}+\mathbb Z^d$ is the ball of radius $r$ and center $0$ in the torus, for some $r\in(0,1/2)$, and
\[
\mathscr{D}(x+B_r,\mathcal{P}_N)=\sum_{n=1}^N\chi_{x+B_r}(p_n)-N|B_r|
,\]
where $|B_r|$ is the Lebesgue measure of the ball $B_r$.
Most known lower estimates on the norm of the discrepancy function in this case involve some type of averaging with respect to the radius (see e.g. \cite{Beck}, but one should also recall Beck's result \cite{Beck2} concerning the discrepancy with respect to the intersection of disks of fixed radius with the unit square). Here we focus on the following result due to Montgomery (see \cite{mon_2radii} or \cite[Chapter 6]{mont}): if $d\not \equiv 1\operatorname{mod}4$, then there exists a constant $c$ such that for every $r\in(0,1/4)$, for every integer $N$ and for every finite sequence $\mathcal P_N=\{p_n\}_{n=1}^N$,
\[
\left(\int_{\mathbb{T}^d}\left\vert \mathscr D( x+B_r,\mathcal P_N) \right\vert ^{2}dx +\int
_{\mathbb{T}^d}\left\vert \mathscr D( x+B_{2r},\mathcal P_N) \right\vert ^{2}dx\right)^\frac12 \geqslant
cN^{\frac12-\frac{1}{2d}}.
\]
 In fact, the estimate was originally proven by Montgomery in the case $d=2$, but the proof can be easily adapted to the case $d\not \equiv 1\operatorname{mod}4$. See also \cite{BGG2022} for the corresponding version on compact homogeneous spaces, and \cite{BMS,BGGM} for the estimates of the discrepancy with respect to balls with a single fixed radius in the case of the sphere and compact homogeneous spaces. 

Now that we have a few examples in mind, let us focus on the discrepancy 
\[
\mathscr D(A,\mathcal P_N)=\sum_{n=1}^N\chi_A(p_n)-N\mu(A)
\]
as a function of the parameters describing the collection $\mathcal A$. Observe that the first term
$\sum_{n=1}^N\chi_A(p_n)$ is a function that takes integer values between $0$ and $N$, while in all our examples the second term $N\mu(A)$ is analytic in the parameters. Figure \ref{figure} shows the functions
\begin{align*}
(x_1,x_2)&\mapsto \sum_{n=1}^N\chi_{[0,x_1)\times[0,x_2)}(p_n)\qquad \text{ anchored boxes, }d=2\\
(x_1,x_2)&\mapsto \sum_{n=1}^N\chi_{(x_1,x_2)+B_r}(p_n)\qquad \text{ balls, } r=1/5,\,d=2\\
(x_1,s)&\mapsto \sum_{n=1}^N\chi_{x_1+[-s,s)}(p_n)\qquad \text{ cubes, }d=1\\
(x_1,x_2)&\mapsto \sum_{n=1}^N\chi_{(x_1,x_2)+[-s,s)^2}(p_n)\qquad \text{ cubes,  } s=1/5\text{ fixed, }d=2,
\end{align*}
and it highlights the discontinuities of the discrepancy function for $N$ randomly chosen points in the $d$ dimensional unit cube or torus.
\begin{figure}
     \centering
     \begin{subfigure}[b]{0.45\textwidth}
         \centering
         \includegraphics[width=\textwidth]{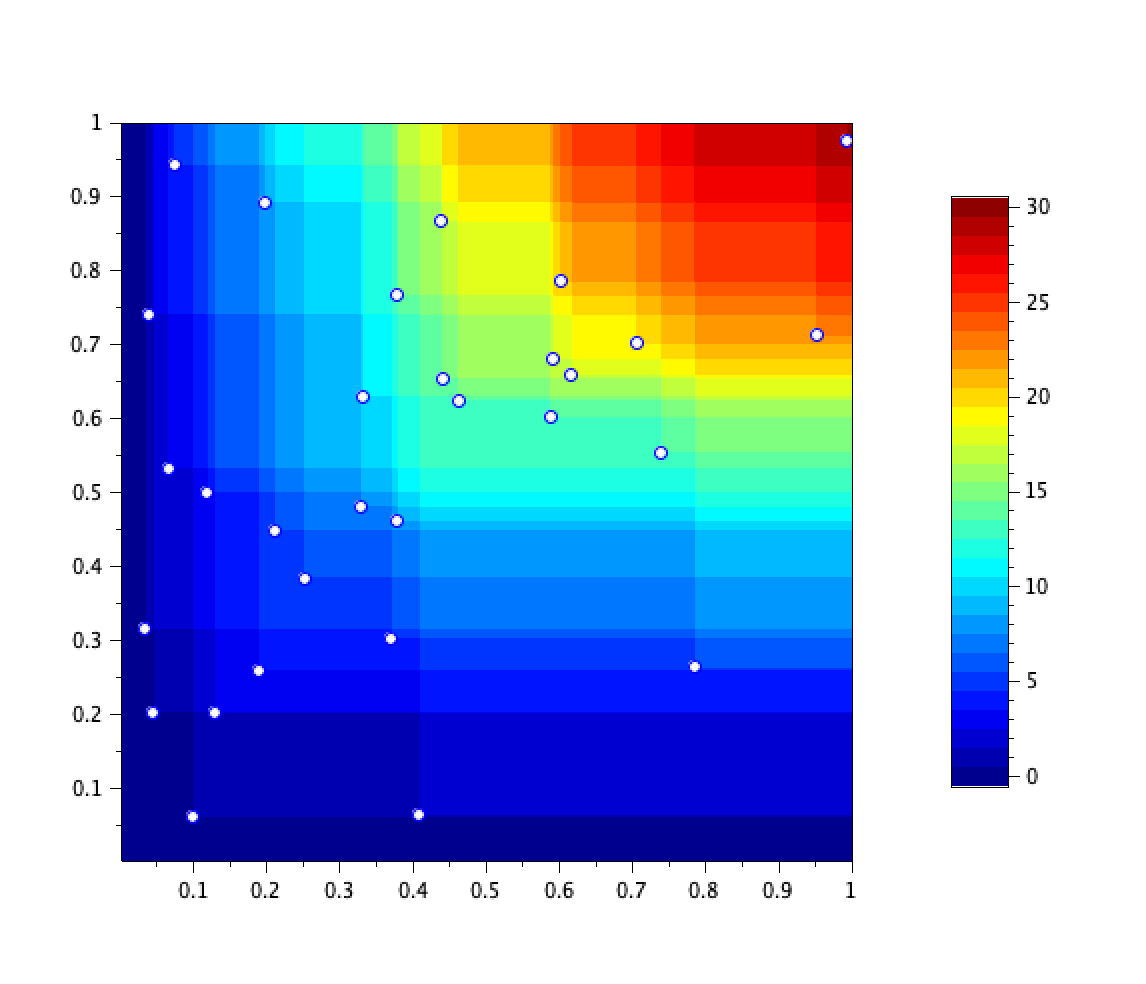}
         \caption{Anchored boxes, $d=2$, $N=30$.}
         \label{boxes}
     \end{subfigure}
     \hfill
     \begin{subfigure}[b]{0.45\textwidth}
         \centering
         \includegraphics[width=\textwidth]{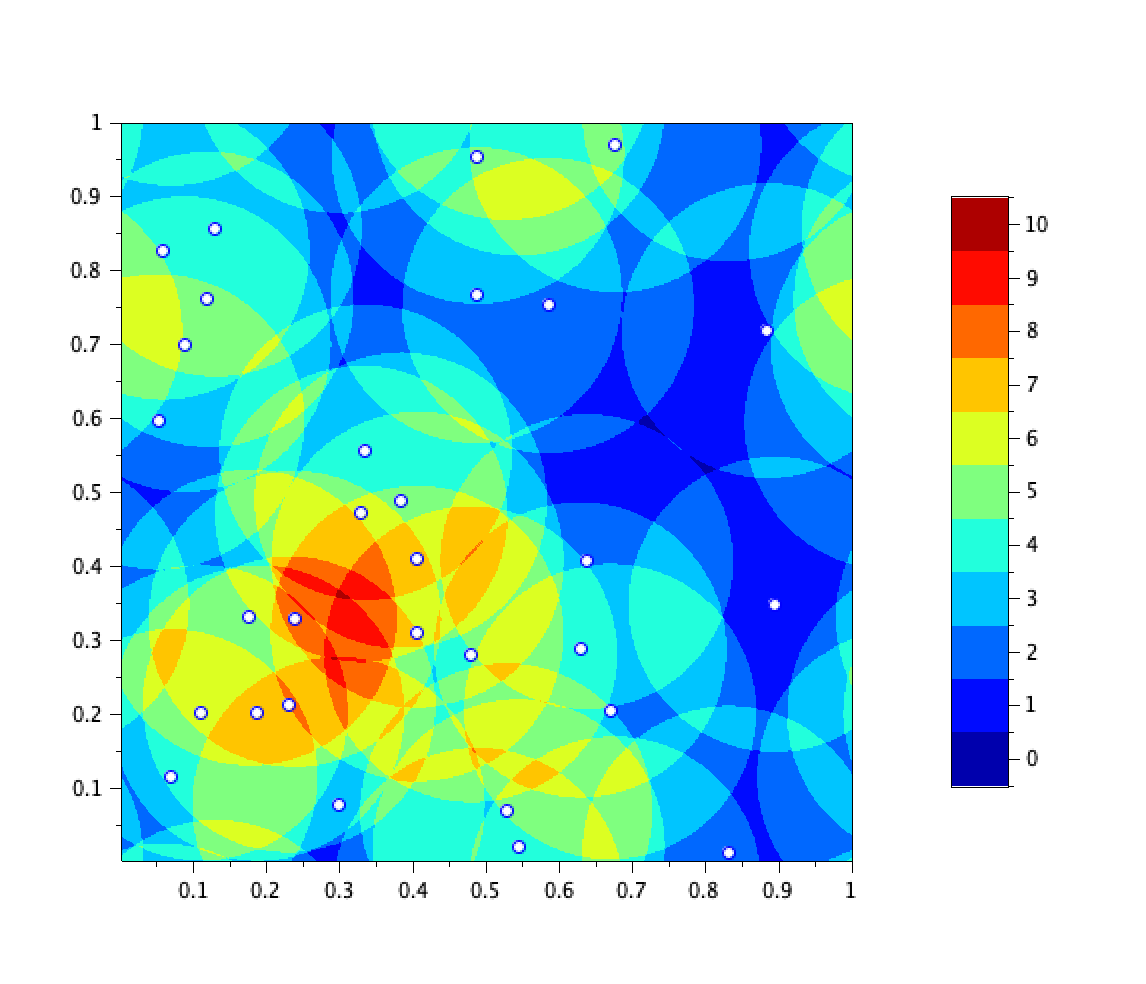}
         \caption{Balls of radius $1/5$, $d=2$, $N=30$.}
         \label{balls}
     \end{subfigure}
     \begin{subfigure}[b]{0.45\textwidth}
         \centering
         \includegraphics[width=\textwidth]{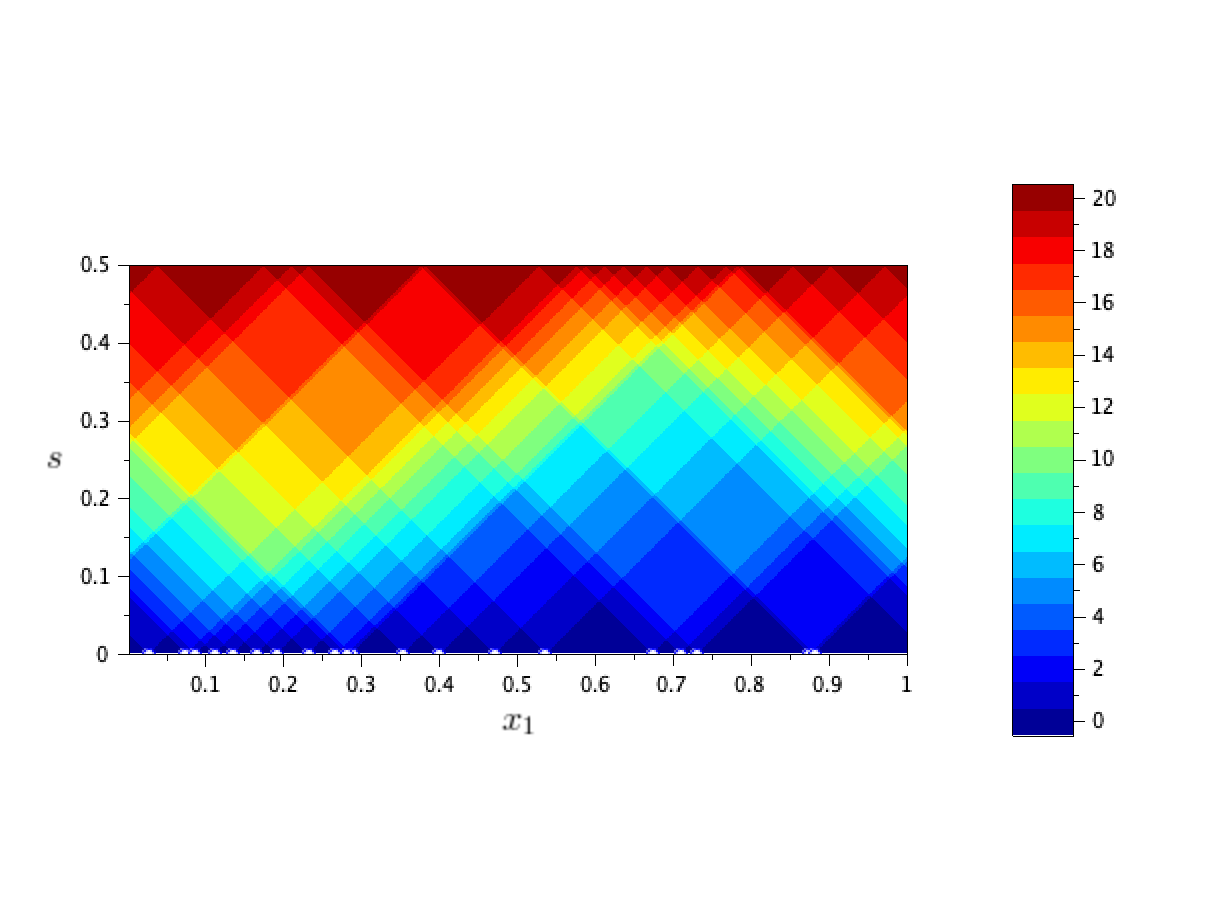}
         \caption{Cubes, $d=1$, $N=20$.}
         \label{segmemnts}
     \end{subfigure}
     \hfill
     \begin{subfigure}[b]{0.45\textwidth}
         \centering
         \includegraphics[width=\textwidth]{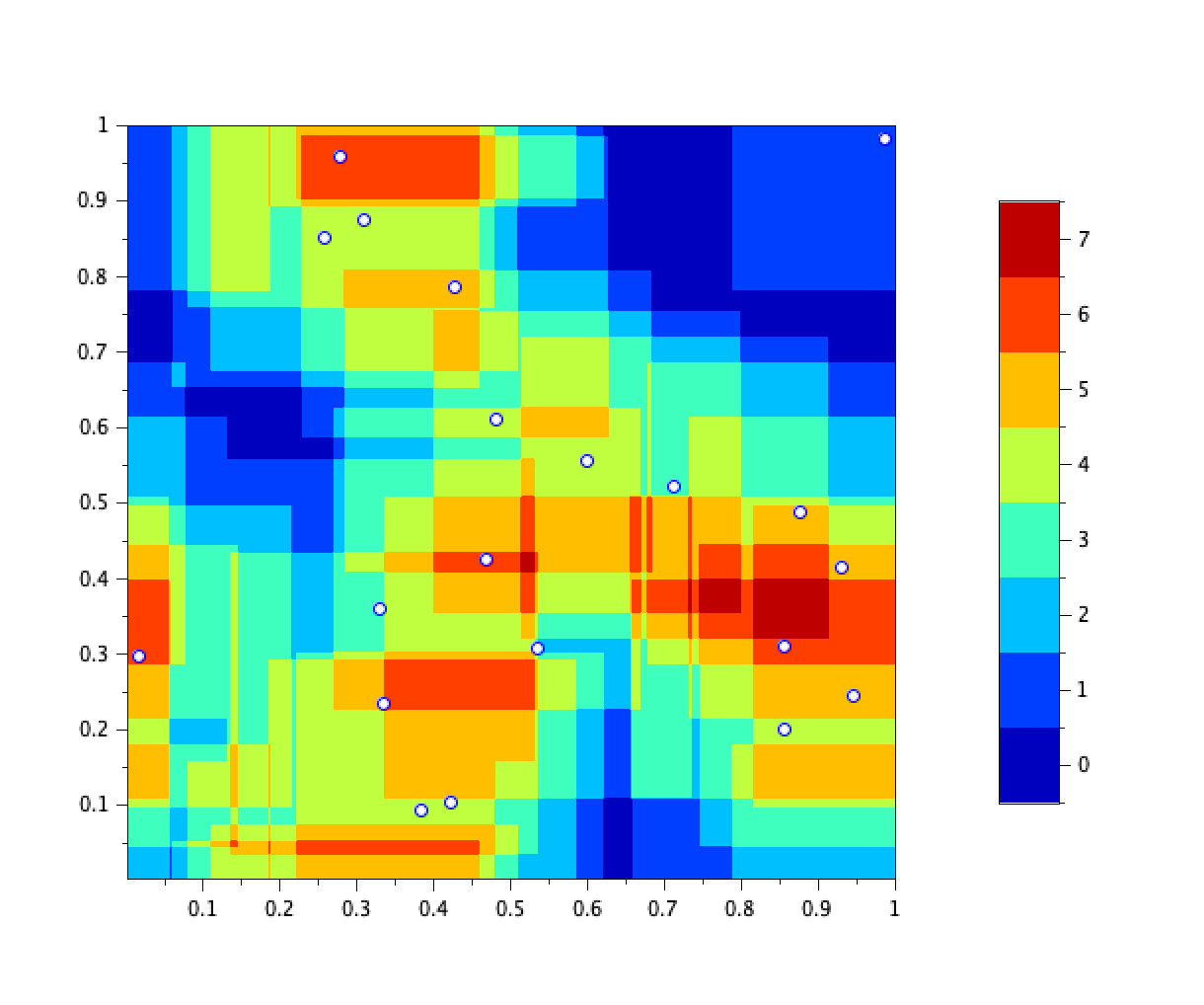}
         \caption{Cubes of fixed side, $d=2$, $N=20$.}
         \label{cubes}
     \end{subfigure}
        \caption{ The counting term of the discrepancy function.}
        \label{figure}
\end{figure}

When the discrepancy function is Riemann integrable, as in all the above examples, one can approximate its $L^2$ norm with the corresponding Riemann sums associated to, say an $M\times\ldots\times M$ grid, for some integer $M$.
For example, in the case of anchored boxes, for any given point sequence $\mathcal P_N$, there exists an integer $M=M(\mathcal P_N)$ depending on $\mathcal P_N$ such that 
\begin{align*}
&\left|\left(\frac{1}{M^{d}}\sum_{j_{1},j_{2},\ldots,j_{d}=1}^{M}\left\vert \mathscr{D}\left(
A\Big(\frac{j_{1}}{M},\cdots,\frac{j_{d}}{M}\Big),\mathcal{P}_{N}\right)  \right\vert
^{2}\right)^{\frac{1}{2}}
-\left(\int_{[0,1)^d}|\mathscr D(A(x),\mathcal P_N)|^2dx\right)^{\frac 12}
\right|\\
&\leqslant \frac 12 \left(\int_{[0,1)^d}|\mathscr D(A(x),\mathcal P_N)|^2dx\right)^{\frac 12}
\end{align*}
and by \eqref{Roth} it follows that
\begin{equation}
\label{discrete}
\left(\frac{1}{M^{d}}\sum_{j_{1},j_{2},\ldots,j_{d}=1}^{M}\left\vert \mathscr{D}\left(
A\Big(\frac{j_{1}}{M},\cdots,\frac{j_{d}}{M}\Big),\mathcal{P}_{N}\right)  \right\vert
^{2}\right)^{\frac{1}{2}}\geqslant \frac c2 \log^{\frac{d-1}2}N.
\end{equation}
Thus, the same lower estimate of the discrepancy holds when the set of all anchored boxes is replaced with the finite set of anchored boxes with upper-right vertex in the grid, and the $L^2$ norm is replaced with an $\ell^2$ norm. 

Unfortunately, though, according to the above argument, the new collection of subsets depends on the sequence $\mathcal P_N$ rather than just on its cardinality $N$. 
Also, due to the discontinuity of the discrepancy function, the above argument does not give any estimate on $M(\mathcal P_N)$. In fact it is not even clear whether the supremum of $M(\mathcal P_N)$ as $\mathcal P_N$ varies in the family of all $N$-point sequences is finite.

Similar observations can be made for the cases of balls or cubes.

In this paper we show lower bounds of the type \eqref{discrete} in the three cases of anchored boxes, cubes and balls described above, with a parameter $M=M_N$ that depends only on the cardinality $N$ of the finite sequence $\mathcal P_N$, thus obtaining a result on irregularity of $N$-point distributions with respect to a given finite collection of sets. We also give upper bounds on the smallest $M_N$ for which the results hold. 

Our results are the following.

\begin{theorem}\label{thm: roth-discreto}
Let $b$ be a positive integer greater than $1$. Let $N$ be a positive integer, let $\nu$ be the integer such that $b^{\nu-2}\leqslant
N<b^{\nu-1}$, and let $M=b^{\nu+\tau}$ for an integer $\tau\ge 1$. Then  there exists an explicit constant $c>0$, depending only on $b$ and $d$, such that, for every finite sequence $\mathcal{P}_{N}=\left\{  p_{n}\right\}  _{n=1}^{N}$ in $\left[  0,1\right)  ^{d}$, 
\[
\left(\frac{1}{M^{d}}\sum_{j_{1},j_{2},\ldots,j_{d}=1}^{M}\left\vert \mathscr{D}\left(
A\Big(\frac{j_{1}}{M},\cdots,\frac{j_{d}}{M}\Big),\mathcal{P}_{N}\right)  \right\vert
^{2}\right)^{\frac{1}{2}}\geqslant c\log^{\frac{d-1}{2}}(  N)  .
\]
In particular, there exists an anchored box $A(j_1/M, \cdots, j_d/M)$ such that 
\begin{equation}\label{discrete-L-infty}
\left|\mathscr{D}\left(A\Big(\frac{j_1}{M},\cdots,\frac{j_d}{M}\Big), \mathcal P_N\right)\right|\geqslant c \log^{\frac{d-1}{2}}(N).
\end{equation}
When $d=2$ there exists an anchored box $A(j_1/M, j_2/M)$ such that the stronger estimate
\[
\left|\mathscr{D}\left(A\Big(\frac{j_1}{M}, \frac{j_2}{M}\Big), \mathcal P_N\right)\right|\geqslant\widetilde{c} \log(N)
\]
holds for an explicit constant $\widetilde{c}$ depending only on $b$. 
\end{theorem}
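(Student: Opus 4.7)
The plan is to rerun Roth's classical $L^{2}$ argument (respectively Halász's argument in dimension two) with the test function $F$ chosen to be constant on the grid cells $\prod_{i}[j_{i}/M,(j_{i}+1)/M)$ of side $1/M$. Once this is arranged, an orthogonality/cancellation calculation will identify the discrete pairing of $F$ with $\mathscr{D}$ with the continuous one exactly, so the desired discrete lower bounds will descend directly from the classical continuous ones.

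For the $L^{2}$ statement I set $r=\nu$; the hypothesis $b^{\nu-2}\leqslant N<b^{\nu-1}$ then gives $b^{r}\geqslant 2N$ and $b^{r}\leqslant b^{2}N$. For every $b$-adic box $R=R_{1}\times\cdots\times R_{d}\subset[0,1)^{d}$ of shape $\vec{\mu}=(\mu_{1},\ldots,\mu_{d})$ with $|\vec{\mu}|=r$, I form the product Haar-like function $H_{R}(x)=\prod_{i}h_{R_{i}}(x_{i})$, where each $h_{R_{i}}$ is a step function on the $b$ equal sub-intervals of $R_{i}$ taking values $v_{0},\ldots,v_{b-1}$ with $\sum_{k}v_{k}=0$ (for instance $v_{k}=b-1-2k$, so that $\int h_{R_{i}}(x)\,x\,dx\neq 0$). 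Following Roth I set $F_{\vec{\mu}}=\sum_{R}H_{R}$, with the sum running over the boxes of shape $\vec{\mu}$ containing no point of $\mathcal{P}_{N}$, and $F=\sum_{|\vec{\mu}|=r}F_{\vec{\mu}}$. Since $\mu_{i}\leqslant r=\nu$ for every $i$, each $h_{R_{i}}$ is constant on sub-intervals of length $b^{-\mu_{i}-1}\geqslant 1/M$, so $F$ is constant on grid cells; in particular $M^{-d}\sum_{j}F(j/M)^{2}=\|F\|_{L^{2}}^{2}\lesssim\nu^{d-1}$ by the usual orthogonality of Haar products of different shapes.

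The heart of the matter is the exact identity
\[
\frac{1}{M^{d}}\sum_{j\in\{0,\ldots,M-1\}^{d}}F(j/M)\,\mathscr{D}\bigl(A(j/M),\mathcal{P}_{N}\bigr)=\int_{[0,1]^{d}}F(x)\,\mathscr{D}(A(x),\mathcal{P}_{N})\,dx.
\]
Splitting $\mathscr{D}$ into its ``hit'' part $\sum_{n}\chi_{A(x)}(p_{n})$ and its ``main'' part $-Nx_{1}\cdots x_{d}$, I verify this term by term. For the main part a direct computation yields the one-dimensional identity $\tfrac{1}{M}\sum_{j=0}^{M-1}h_{R_{i}}(j/M)(j/M)=\int_{0}^{1}h_{R_{i}}(x)\,x\,dx$, because the Riemann-sum error on the $k$-th sub-interval of $R_{i}$ equals $v_{k}\cdot b^{-\mu_{i}-1}/(2M)$ and cancels thanks to $\sum_{k}v_{k}=0$; taking products in the $d$ coordinates propagates the identity to every $H_{R}$ and hence to $F$. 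For the hit part, the same algebra shows that each 1D sum $\tfrac{1}{M}\sum_{j}h_{R_{i}}(j/M)\chi_{(p_{n,i},1]}(j/M)$ vanishes whenever $p_{n,i}\notin R_{i}$ (the grid inside $R_{i}$ is either entirely above $p_{n,i}$, in which case zero-mean kills the sum, or entirely below, in which case $\chi$ is zero), exactly mirroring the continuous situation. Since Roth's construction retains only empty $R$, all hit terms vanish in both the discrete and the continuous pairings. The classical Roth lower bound $\int F\,\mathscr{D}\,dx\gtrsim\nu^{d-1}$ together with Cauchy--Schwarz then delivers the claimed $\log^{(d-1)/2}(N)$ estimate, and \eqref{discrete-L-infty} is immediate.

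For the sharp $\log(N)$ bound in dimension two I would follow Halász's strategy, replacing the Roth test function by a Riesz-product test function of the form $\Phi=\prod_{\mu=0}^{r}\bigl(1+\gamma\,\psi_{\mu}\bigr)-1$, where each $\psi_{\mu}$ is a product of two Haar-like functions at shapes $(\mu,r-\mu)$ and $\gamma$ is a small absolute constant. Expanding the product yields a sum of products of Haar-like functions whose scales, within each coordinate, are pairwise distinct; every such product is therefore either zero or a scalar multiple of a single Haar-like function at the finest scale present (still of index $\leqslant\nu$), so $\Phi$ is constant on grid cells and the zero-mean and hit-vanishing properties used above survive. The same cancellation argument then gives the exact discrete-equals-continuous identity against $\Phi$, and combining Halász's classical estimates $\int\Phi\,\mathscr{D}\,dx\gtrsim r$ and $\int|\Phi|\,dx\leqslant C$ with $M^{-2}\sum_{j}|\Phi(j/M)|=\|\Phi\|_{L^{1}}$ and $\ell^{\infty}$--$\ell^{1}$ duality on the grid delivers the $\log(N)$ bound, together with an explicit constant depending only on $b$. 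The main obstacle I anticipate is the careful bookkeeping in this Halász-type expansion, in particular verifying that the nested-scale Haar-like products preserve both the constancy on grid cells and the cancellation identities that make the discrete and continuous pairings agree; the $L^{2}$ part of the theorem, by contrast, is essentially automatic once one recognizes that Roth's test function with $r=\nu$ is already a grid step function.
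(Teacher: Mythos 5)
Your $L^2$ argument is correct and follows essentially the same route as the paper's: both rest on the observation that, with $M=b^{\nu+1}$, a $b$-adic Haar function of scale $\mu\leqslant\nu$ is constant on $1/M$-cells, has discrete mean zero over the $M$-grid, and satisfies an exact Riemann-sum cancellation against $x\mapsto x$ (your per-subinterval error $\pm v_k b^{-\mu-1}/(2M)$ summing to zero is the same calculation as the paper's explicit evaluation of $\sum_j (j/M)h_I(j/M)$). You package these facts as the identity $M^{-d}\sum_j F(j/M)D_N(j/M)=\int F D_N\,dx$ and then quote the continuous Roth bound, whereas the paper computes the discrete inner products $\langle D_N,h_R\rangle=-N/b^{2d+2\nu}$ directly. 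That is a legitimate alternative framing, but it is not the free reduction your opening paragraph suggests: proving the identity \emph{is} the paper's one-dimensional computation, and with your nonstandard weights $v_k=b-1-2k$ (rather than the paper's $v_0=-1$, $v_1=1$, rest zero) the continuous Roth constant must also be recomputed, since the classical argument is run with the standard $b$-adic Haar basis.

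The $d=2$ endpoint, by contrast, is only a plan; the step that actually carries the difficulty is left undone. The content of the Hal\'asz/Schmidt case is the bound $\sum_{\ell\geqslant2}\kappa^{\ell}\left|\langle D_N,G_\ell\rangle\right|\lesssim\kappa^2\nu$, which requires (i) a count of how many nonzero $\pm h_R$ terms of each finer shape $(r_\ell,\nu-r_1)$ the $\ell$-fold products $f_{r_1}\cdots f_{r_\ell}$ can produce (the paper shows at most $b^{\nu+r_\ell-r_1}$, with distinct families giving disjoint intersections), and (ii) the inner-product evaluation at those finer scales, which are still $\leqslant\nu$ in each coordinate and hence resolved by the grid. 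Your sketch names this bookkeeping as the main obstacle but does not resolve it, and you cannot cite the classical Hal\'asz estimates as a black box because they are stated for the dyadic Haar system and the continuous $L^1$ pairing, not for $b$-adic weights on a discretized grid. Finally, your choice $v_k=b-1-2k$ is an unforced complication precisely here: when two nested Haar-like functions at distinct scales multiply, the absorbed scalar is one of the $v_k$'s, so $\ell$-fold products carry coefficients as large as $(b-1)^{2(\ell-1)}$ rather than $\pm1$, which shrinks the admissible $\kappa$ and inflates the constants. The paper's simpler weights keep every absorbed scalar in $\{-1,0,1\}$, which is what lets it run the error estimate cleanly and extract an explicit $\widetilde c$.
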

Set
\[
\mathbb J_M= \Big\{ j\in\mathbb Z : -\frac{M}{2}\leqslant j< \frac{M}{2}\Big\}
\]
and
\[
\mathbb S_M=\Big\{\frac1M, \frac2M,\ldots, \frac12-\frac1M\Big\}.
\]
\begin{theorem}\label{teo_square}
There exists a constant $c>0$ such that, for every positive integer $N$, for every even integer $M\geqslant18dN$ and for every finite sequence $\mathcal{P}_N=\{p_n\}_{n=1}^N$ in $\mathbb{T}^d$, 
   \begin{align*}
   &\left(\frac{1}{(\operatorname{card} \mathbb J_M)^d}\frac{1}{\operatorname{card}\mathbb S_M}\sum_{j_1,\ldots,j_d\in\mathbb J_M}\sum_{s\in\mathbb S_M}\bigg| \mathscr{D}\bigg(\Big(\frac{j_1}{M},\cdots, \frac{j_d}{M}\Big)+Q(s),\mathcal{P}_N\bigg)\bigg|^2\right)^{\frac{1}{2}}\geqslant c\log^{\frac{d-1}{2}}(  N). 
   \end{align*}
\end{theorem}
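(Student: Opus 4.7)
The plan is to use Fourier analysis on $\mathbb{T}^d$ to pass from the discrete $\ell^2$ average to its continuous analogue, where the classical Halász--Montgomery lower bound
$$\int_0^{1/2}\!\int_{\mathbb{T}^d}|\mathscr{D}(x+Q(s),\mathcal{P}_N)|^2\,dx\,ds\geqslant c_d\log^{d-1}(N)$$
(see \cite{Hal} and \cite[Chapter 6]{mont}) is available. The hypothesis $M\geqslant 18dN$ will be used precisely to make the quadrature and aliasing errors strictly smaller than this main term.

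First I would expand the discrepancy as a Fourier series,
$$\mathscr{D}(x+Q(s),\mathcal{P}_N)=\sum_{k\in\mathbb{Z}^d\setminus\{0\}}\widehat{\chi_{Q(s)}}(k)\,S_N(k)\,e^{-2\pi i k\cdot x},$$
where $S_N(k)=\sum_{n=1}^N e^{2\pi i k\cdot p_n}$ satisfies $|S_N(k)|\leqslant N$ and $\widehat{\chi_{Q(s)}}(k)=\prod_{i=1}^d\phi_{k_i}(s)$, with $\phi_0(s)=2s$ and $\phi_m(s)=\sin(2\pi m s)/(\pi m)$ for $m\neq 0$. Using the orthogonality identity $\frac{1}{M}\sum_{j\in\mathbb{J}_M}e^{-2\pi i\ell j/M}=\mathbf{1}_{\{M\mid\ell\}}$, summing $|\mathscr{D}(j/M+Q(s),\mathcal{P}_N)|^2$ over the discrete grid $j\in\mathbb{J}_M^d$ yields
$$\frac{1}{M^d}\sum_{j\in\mathbb{J}_M^d}|\mathscr{D}(j/M+Q(s),\mathcal{P}_N)|^2=\sum_{k\neq 0}|\widehat{\chi_{Q(s)}}(k)|^2|S_N(k)|^2+\mathcal{E}(s),$$
where the remainder $\mathcal{E}(s)$ collects cross terms over pairs $(k,k')\in(\mathbb{Z}^d\setminus\{0\})^2$ with $k\equiv k'\pmod M$ and $k\neq k'$.

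Next I would average over $s\in\mathbb{S}_M$ and compare with the continuous integral on $[0,1/2]$. Writing $\sin^2(2\pi k_i s)=\frac{1}{2}(1-\cos(4\pi k_is))$ in each factor of $|\widehat{\chi_{Q(s)}}(k)|^2$ reduces the comparison to closed-form geometric sums of cosines, whose quadrature error is of order $|k_i|/M$ in each coordinate. The aliasing remainder is treated similarly: since $k-k'\in M\mathbb{Z}^d\setminus\{0\}$ forces $|k_i-k'_i|\geqslant M$ in at least one coordinate, and $|\phi_m(s)|\leqslant 1/(\pi|m|)$ for $m\neq 0$, one obtains a total aliasing bound of the shape $(\text{const})\cdot N^2/M$ times logarithmic factors. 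The explicit constant $18d$ in the hypothesis $M\geqslant 18dN$ emerges from the precise comparison between these error terms and the main term $c_d\log^{d-1}(N)$.

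The main obstacle I foresee is this third step: controlling the bilinear aliasing sum with an explicit dimensional constant. One must carefully split the Fourier sum into frequencies $|k|\lesssim M$ (handled by quadrature against the continuous integral) and $|k|\gtrsim M$ (handled by the $1/|k|$ decay of $\phi$), and exploit the product structure of $\widehat{\chi_{Q(s)}}$ together with the crude bound $|S_N|\leqslant N$ so that no resonant pair contributes comparably to the diagonal. Once these bounds are in place, subtracting the error and invoking the classical continuous lower bound yields Theorem \ref{teo_square}.
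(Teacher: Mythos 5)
Your overall strategy (pass from the discrete $\ell^2$ average to the continuous one by aliasing and then invoke the known Hal\'asz--Montgomery lower bound) is genuinely different from the paper's, which never compares with the continuous case at all. But as written the argument has a gap in exactly the place you flag as the main obstacle, and I don't think it can be closed with the bound you propose. The aliasing bound ``$(\mathrm{const})\cdot N^2/M$ times logarithmic factors'' is quantitatively insufficient: with the stated hypothesis $M\geqslant 18dN$ this is of order $N/(18d)$, which for large $N$ dwarfs the target main term $c_d\log^{d-1}(N)$, so subtracting it would destroy the lower bound. Worse, that bound is not even obviously achievable: since $\widehat{\chi_{Q(s)}}(k)=\prod_i\phi_{k_i}(s)$ decays only like $1/|k_i|$ in each coordinate, the aliasing tail $\sum_{m\neq 0}\widehat{\chi_{Q(s)}}(\ell+Mm)S_N(\cdot)$ is not absolutely summable (already for $m$ supported on a single coordinate one gets a divergent harmonic-type sum), so any honest estimate of $\mathcal E(s)$ must exploit cancellation in $\sin(2\pi(\ell_i+Mm_i)s)$ rather than the crude $|S_N|\leqslant N$ you use. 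A parallel difficulty arises for the quadrature error in $s$: for $|k_i|$ up to the relevant range $\sim \varepsilon M$, the factor $1-\cos(4\pi k_ir/M)$ oscillates on a scale comparable to the grid spacing, so the error per coordinate is a constant, not $o(1)$; this is why the paper does \emph{not} prove the discrete average of $\prod(1-\cos(4\pi k_u r/M))$ is close to the continuous one, but only that it is bounded below by a (possibly small) constant times $M$ (Lemma \ref{groups} and Proposition \ref{prop6}). There is also a boundary subtlety you do not address: $\chi_{Q(s)}$ is a step function and the sampling points $j/M$ can land exactly on its jumps, which makes pointwise Fourier inversion delicate; the paper dodges this by first replacing each $p_n$ with $\lfloor Mp_n\rfloor/M$ so that the sampled discrepancy becomes an exact finite-dimensional object.

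What the paper does instead is intrinsically discrete: it computes the discrete Fourier transform $\widehat{\chi}_{-Q(r/M)}(k)$ in closed form as a product of Dirichlet kernels $\sin(2\pi k_u r/M)/\sin(\pi k_u/M)$ (a finite sum, so no convergence issues), and then proves a \emph{direct} lower bound $\sum_{r}|\widehat{\chi}_{-Q(r/M)}(k)|^2\gtrsim M^{2d+1}/(\prod_u\max(1,|k_u|))^2$ for $k\in[-\varepsilon M,\varepsilon M]^d\setminus\{0\}$ (Proposition \ref{prop7}). This replaces both your quadrature-in-$s$ and your aliasing-in-$x$ steps by a single positive lower bound, with no subtraction of error terms, which is what makes the clean constraint $M\geqslant 18dN$ possible. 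From there the paper mirrors Montgomery's continuous proof (log-reweighting via Lemma \ref{log} and Proposition \ref{int}, then Cassels--Montgomery), which is the part your sketch has in common with it. If you want to salvage a transference argument, you would at least need to (i) reduce to lattice points first, (ii) estimate the aliasing tail using the explicit Dirichlet-kernel periodization identity rather than absolute values, and (iii) show that the loss is a constant \emph{fraction} of the main term rather than an additive $N^2/M$; at that point, you would essentially be rederiving the paper's Propositions \ref{prop6}--\ref{prop7}.
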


\begin{theorem}\label{teo_balls}
Let $d\not \equiv 1\operatorname{mod}4$. Then there exist constants
$C,c>0$ such that, for every $0<r<1/4$,  for every integer $N$, for
every even integer $M$ with
\begin{equation}
M\geqslant C {N^{1+\frac{1}{  2d} }}r^{-1},\label{M da sotto}%
\end{equation}
and for every finite sequence $\mathcal{P}_{N}=\left\{  p_{n}\right\}_{n=1}^N$ in  $\mathbb{T}^d$, we
have%
\begin{align*}
\bigg(\frac{1}{(\operatorname{card}\mathbb{J}_M)^{d}}&\sum_{k=1,2}\,\,\sum_{j_1,\ldots,j_d\in\mathbb{J}_M}  \left\vert \mathscr{D} \bigg(\Big(\frac{j_1}{M},\ldots, \frac{j_d}{M}\Big)+B_{kr},\mathcal{P}_N\bigg)
\right\vert ^{2}\bigg)^{\frac{1}{2}}
\geqslant cr^\frac{d}{2}N^{\frac{1}{2}-\frac{1}{2d}}.
\end{align*}
\end{theorem}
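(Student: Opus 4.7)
My approach is a Fourier-analytic one, discretizing the classical Beck-type argument for ball discrepancy on $\mathbb{T}^d$. Writing $D_k(x) = \mathscr{D}(x+B_{kr},\mathcal{P}_N) = \sum_n[\chi_{B_{kr}}(p_n-x)-|B_{kr}|]$ and expanding in Fourier series on $\mathbb{T}^d$,
\[
D_k(x) = \sum_{m\in\mathbb{Z}^d\setminus\{0\}} \widehat{\chi}_{B_{kr}}(m)\, e^{-2\pi i m\cdot x}\, S(m),\qquad S(m) = \sum_{n=1}^N e^{2\pi i m\cdot p_n},
\]
with $\widehat{\chi}_{B_r}(m) = r^{d/2}|m|^{-d/2}J_{d/2}(2\pi r|m|)$. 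By the orthogonality of the characters $e^{2\pi i m\cdot j/M}$ on $\mathbb{J}_M^d$, the discrete $\ell^2$ average splits as
\[
\frac{1}{M^d}\sum_{j\in\mathbb{J}_M^d}|D_k(j/M)|^2 = \sum_{m\neq 0}|\widehat{\chi}_{B_{kr}}(m)|^2|S(m)|^2 + \sum_{\ell\neq 0}\sum_{m}\widehat{\chi}_{B_{kr}}(m)\overline{\widehat{\chi}_{B_{kr}}(m+M\ell)}S(m)\overline{S(m+M\ell)},
\]
a \emph{main term} plus an \emph{aliasing error}.

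\textbf{Lower bound on the main term via the two-radii trick.} The large-argument asymptotic $J_{d/2}(z) = \sqrt{2/(\pi z)}\cos(z-(d+1)\pi/4)+O(z^{-3/2})$ yields $|\widehat{\chi}_{B_{kr}}(m)|^2 \sim (r^{d-1}/(\pi|m|^{d+1}))\cos^2(2\pi kr|m|-(d+1)\pi/4)$ in the range $r|m|\geq c_0$. The elementary trigonometric identity
\[
\inf_{\theta}\bigl[\cos^2(\theta-\alpha)+\cos^2(2\theta-\alpha)\bigr]>0 \iff \alpha\not\equiv \tfrac{\pi}{2}\pmod{\pi},
\]
applied with $\alpha=(d+1)\pi/4$, is equivalent to $d\not\equiv 1\pmod 4$, and in that case gives $\sum_{k=1,2}|\widehat{\chi}_{B_{kr}}(m)|^2 \geq c\,r^{d-1}/|m|^{d+1}$ for $r|m|\geq c_0$; in the complementary range the Taylor expansion of $J_{d/2}$ at the origin gives $|\widehat{\chi}_{B_{kr}}(m)|^2\asymp r^{2d}$. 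Combining these with the nonnegative-kernel bound
\[
\sum_{m\neq 0,\,|m|\leq R}|S(m)|^2 \geq cNR^d\qquad(R\geq 2N^{1/d}),
\]
obtained from a product Fej\'er kernel $\varphi_R$ via $\sum_{n,n'}\varphi_R(p_n-p_{n'})\geq N\varphi_R(0)\asymp NR^d$, and choosing $R\asymp N^{1/d}$ when $rN^{1/d}\geq c_0$ (respectively $R\asymp 1/r$ otherwise), one extracts
\[
\sum_{k=1,2}\sum_{m\neq 0}|\widehat{\chi}_{B_{kr}}(m)|^2|S(m)|^2 \;\gtrsim\; r^d N^{1-1/d}.
\]

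\textbf{Control of the aliasing.} Parameterizing $m$ in the fundamental domain $|m|_{\infty}\leq M/2$, every aliased frequency $m+M\ell$ with $\ell\neq 0$ satisfies $|m+M\ell|\geq M|\ell|/2$, whence the pointwise decay $|\widehat{\chi}_{B_{kr}}(m+M\ell)|^2\leq Cr^{d-1}(M|\ell|)^{-(d+1)}$. Since the exponent $d+1$ exceeds $d$, this bound is summable over $\ell\in\mathbb{Z}^d\setminus\{0\}$. Cauchy--Schwarz in $m$, together with the Parseval estimate $\sum_m|\widehat{\chi}_{B_{kr}}(m)|^2 = |B_{kr}|(1-|B_{kr}|)\leq r^d$ and the trivial bound $|S(m)|\leq N$ on finitely many lattice sites, controls the aliasing by a quantity of the form $Cr^{2d-1}N^2/M^{\gamma}$ for an appropriate $\gamma>0$; the hypothesis $M\geq CN^{1+1/(2d)}r^{-1}$ is tuned precisely so that this error is dominated by a small constant multiple of the main term $r^d N^{1-1/d}$.

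\textbf{Anticipated main obstacle.} The delicate point is the aliasing bookkeeping: one must extract enough decay in $\ell$ from $|\widehat{\chi}_{B_{kr}}(m+M\ell)|$ without losing control of the $|S(\cdot)|^2$ factors on the shifted fundamental domains (for which no summability in $\ell$ is available directly), and then match the resulting error against the main term with the sharp threshold $M\gtrsim N^{1+1/(2d)}r^{-1}$. The role of the excluded residue $d\equiv 1\pmod 4$ is transparent from the two-radii inequality; bypassing it would require a three-or-more-radii variant with different numerology.
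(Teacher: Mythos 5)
Your skeleton shares the two ingredients that ultimately drive the estimate — the two-radii trick combined with the large-argument asymptotics of $J_{d/2}$ (and the resulting role of $d\not\equiv 1\pmod 4$), and a Cassels--Montgomery-type lower bound for $\sum|S(m)|^2$ — and both appear, in essentially the form you describe, in the paper. But the route from the discrete $\ell^2$ average to those ingredients is genuinely different, and your route has a gap precisely at the point you flagged as the main obstacle.

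\textbf{The aliasing step does not close.} You expand $D(\cdot)$ in its \emph{continuous} Fourier series and then sample on the grid, producing a main term plus the aliasing bilinear form $\sum_{\ell\neq 0}\sum_m \widehat{\chi}_{B_{kr}}(m)\overline{\widehat{\chi}_{B_{kr}}(m+M\ell)}S(m)\overline{S(m+M\ell)}$. This already presupposes the absolutely convergent resummation by cosets of $M\mathbb{Z}^d$, but $\widehat{\chi}_{B_r}$ is not $\ell^1(\mathbb{Z}^d)$: the decay $|\widehat{\chi}_{B_r}(m)|\lesssim r^{(d-1)/2}|m|^{-(d+1)/2}$ gives $\sum_{|m|\sim 2^j}|\widehat{\chi}_{B_r}(m)|\gtrsim r^{(d-1)/2}2^{j(d-1)/2}$, which grows with $j$ for every $d\geqslant 2$. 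Consequently $\sum_{\ell}|\widehat{\chi}_{B_r}(m+M\ell)|$ diverges, and none of the bounds you invoke salvages it: Cauchy--Schwarz in $m$ and shift-invariance give $\bigl|\sum_m c_m\overline{c_{m+M\ell}}\bigr|\leqslant\sum_m|c_m|^2$ for every $\ell$, so the $\ell$-sum is infinite; using $|S|\leqslant N$ and the Parseval bound $\sum_m|\widehat{\chi}_{B_{kr}}(m)|^2\leqslant r^d$ leads to the same divergence; and the square-summable decay $|\widehat{\chi}_{B_{kr}}(m+M\ell)|^2\lesssim r^{d-1}(M|\ell|)^{-(d+1)}$ does not help, because the aliasing is linear, not quadratic, in the aliased factor. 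A more careful dyadic bookkeeping on $\sum_m|\widehat{\chi}(m)||\widehat{\chi}(m+M\ell)|$ only improves the $\ell$-decay to $|\ell|^{-1}$, still non-summable in $d\geqslant 2$. So the claimed error $Cr^{2d-1}N^2/M^\gamma$ is not established.

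\textbf{What the paper does instead.} The paper avoids aliasing entirely by making both the evaluation points \emph{and} the data points discrete, and then computing an \emph{exact} discrete Fourier transform. Writing $p_n=\widetilde{p}_n+q_n$ with $\widetilde{p}_n\in M^{-1}\mathbb{Z}^d$ and $q_n\in[-\tfrac{1}{2M},\tfrac{1}{2M})^d$, one has $\chi_{M^{-1}m+B_r}(p_n)=\chi_{M^{-1}m+B_r-q_n}(\widetilde{p}_n)$, and the discrete Fourier transform of $D_N(\cdot,r)$ on $\mathbb{J}_M^d$ equals $\sum_n e^{-2\pi i k\cdot\widetilde{p}_n}\,\widehat{\chi}_{-B_r+q_n}(k)$ (minus the $k=0$ mass), where $\widehat{\chi}_{-B_r+q_n}(k)$ is the \emph{finite} sum $\sum_{m\in\mathbb{J}_M^d}\chi_{-B_r+q_n}(M^{-1}m)e^{-2\pi ik\cdot M^{-1}m}$. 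The crucial Lemma \ref{lem:fourier_ball} then rewrites this finite sum as $G(M^{-1}k)^{-1}$ times the continuous Fourier transform of the ``pixelated ball'' $A_{rM,q}$ (union of unit cubes over the lattice points of $M(-B_r+q)$); comparing $A_{rM,q}$ with the Euclidean ball of radius $rM+\sqrt{d}$ produces the Bessel main term plus an error integral over the rind $E_{rM,q}$, bounded pointwise by $c(rM)^{d-1}$. All sums and integrals are finite, so there is nothing to alias, and the tolerance $M\gtrsim N^{1+\frac{1}{2d}}r^{-1}$ enters only at the very end, when matching the rind error and the Cassels--Montgomery $-N^2$ loss against the main term. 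Your Bessel/two-radii analysis and the $N\asymp L^d r^{-d}$ numerology are compatible with the paper's, but the argument cannot be completed as proposed without replacing the aliasing decomposition by an exact discrete-Fourier computation of this type (or, alternatively, inserting a smoothing and quantifying the comparison to the sharp indicator, which your sketch does not do).
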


One should first observe that the classical theorems of Roth and Schmidt for boxes, of Hal\'asz for cubes, and Montgomery for balls follow immediately from the above results, by simply letting $M\to+\infty$.

Concerning the irregularity of $N$-point distributions with respect to finite collections of subsets, one should observe that it may easily happen that there exist sequences with vanishing discrepancy function. For example, if $N=K^d$ for some positive integer $K$, then the finite sequence $\mathcal P_N=\{(j_1/K,\ldots,j_d/K):j_1,\ldots,j_d=0,\ldots,K-1\}$ has zero discrepancy with respect to all anchored boxes $A(j_1/K,\ldots,j_d/K)$, for $j_1,\ldots,j_d=1,\ldots, K$. This shows that in \eqref{discrete} one needs $M>N^{1/d}$. A second example, involving a collection of boxes with a finer resolution, is the following: if $b$ is a power of a prime number, $b\geqslant d-1$, then by \cite[p. 125]{D-P} there exist point sets of $N=b^{\nu-2}$ points with zero discrepancy with
respect to all $b$-adic boxes with volume $N^{-1}$. See Remark \ref{rmk-roth-discreto}. 

As we already mentioned, in our research it is important that the nodes and weights of the Riemann sums used to approximate the $L^2$ norms be previously identified and independent of the specific $N$-point sequence, as the final estimate can then be interpreted as a result on irregularity of $N$-point distributions with respect to a given finite collection of sets. It should be mentioned here that, if one is willing to accept that the nodes and weights of the Riemann sum depend on the specific $N$-point sequence, then a simple argument can be given, at least in the case of anchored boxes. Let $\mathcal L$ be the grid generated by the finite sequence $\mathcal P_N$, meaning that $x\in\mathcal L$ if and only if every coordinate of $x$ coincides with the corresponding coordinate of a point of $\mathcal P_N$. $\mathcal L$ naturally partitions $(0,1]^d$ into disjoint boxes of the form $C=\Pi_{i=1}^d(a_i,b_i]$, and we call $\mathcal C$ the collection of all such boxes
(see Figure \ref{boxes} to get an idea).
One can easily show that for every $C\in\mathcal C$ and for every $x\in C$, if one calls $x(C,0)$ and $x(C,1)$ respectively the lower left and upper right vertices of $C$,
\[
|\mathscr D(A(x),\mathcal P_N)|\leqslant \max\{
|\mathscr D(\overline{A(x(C,0))},\mathcal P_N)|,\,|\mathscr D({A(x(C,1))},\mathcal P_N)|\}.
\]
Therefore
\[
\int_{(0,1]^d}|\mathscr D(A(x),\mathcal P_N)|^2dx
\leqslant \sum_{C\in\mathcal C}|C|\max\left\{
|\mathscr D(\overline{A(x(C,0))},\mathcal P_N)|^2,\,|\mathscr D({A(x(C,1))},\mathcal P_N)|^2
\right\}.\]
It follows that Roth's lower estimate \eqref{Roth} implies the discrete $\ell^2$ estimate
\[
\left(\sum_{C\in\mathcal C}|C|\max\left\{
|\mathscr D(\overline{A(x(C,0))},\mathcal P_N)|^2,\,|\mathscr D({A(x(C,1))},\mathcal P_N)|^2
\right\}\right)^{\frac 12}\geqslant c\log^{\frac{d-1}2}(N).
\]
Notice that the cardinality of $\mathcal C$ is at most $(N+1)^d$. 
Similar observations could be made in the case of cubes or balls, but it should be noticed that the family $\mathcal C$ in this case would be much more difficult to identify (again, see Figure \ref{figure}).

 Rather than using some sort of approximation argument, our strategy aims at a direct proof of the estimates, following -- when possible -- the lines of the corresponding continuous classical results, and dealing with the nontrivial technical complications that arise from working in a discrete setting. In particular, the computations to obtain certain discrete Fourier transforms in the proofs of Theorem \ref{teo_square} and Theorem \ref{teo_balls} are somewhat different from their continuous analogues.
 
\section{Proof of the discrete version of Roth's lower bound}
The proof of Theorem \ref{thm: roth-discreto} follows from a straightforward modification of the Haar basis proof of the classical result (see, e.g., \cite{Bilyk}). To prove the theorem let us first recall some definitions and preliminary facts. A $b$-adic interval is an interval
of the form%
\[
I=\Big(  \frac{a}{b^{r}},\frac{a+1}{b^{r}}\Big]
\]
with $a$ and $r$ positive integers and $0\leqslant a<b^{r}$. A $b$-adic box in $\mathbb{R}^d$ is a box of the form $R=I_1\times\cdots\times I_d$ where $I_1,\ldots,I_d$ are $b$-adic intervals. For any $b$-adic interval $I$ as above define the function
\[
h_{I}(  x)  =
\begin{cases}
-1 & x\in\left(  \frac{a}{b^{r}},\frac{a}{b^{r}}+\frac{1}{b^{r+1}}\right],\\
1 & x\in\left(  \frac{a}{b^{r}}+\frac{1}{b^{r+1}},\frac{a}{b^{r}}+\frac
{2}{b^{r+1}}\right],\\
0 & \text{otherwise}%
\end{cases}
\]
and for any $b$-adic box $R=I_{1}\times\cdots\times I_{d}$ define the function
\[
h_{R}(  x)  =h_{I_{1}}(  x_{1})  \cdots h_{I_{d}}(
x_{d})  .
\]
Let us consider the index set
\[
\mathbb{H}_{\nu}^{d}=\left\{  \mathbf{r}=(r_1,\ldots, r_d)\in\mathbb{N}^{d}:r_{1}+\cdots
+r_{d}=\nu\right\},
\]
where $\mathbb N$ denotes the set of non-negative integers,
and observe that  
\[
\operatorname{card}(  \mathbb{H}_{\nu}^{d})=\binom{\nu+d-1}{d-1}
\geqslant\frac{\nu^{d-1}}{(d-1)!}.
\]
For a fixed $\mathbf{r}\in \mathbb H^d_\nu$, let $\mathcal{D}_{\mathbf{r}}^{d}$ be the collection of
$b$-adic boxes $R=I_{1}\times\cdots\times I_{d}$ such that
\[
I_{j}=\Big(  \frac{a_j}{b^{r_{j}}},\frac{a_j+1}{b^{r_{j}}}\Big]
\]
with $0\leqslant a_j< b^{r_j}$. Observe that every $R\in\mathcal D^d_{\mathbf{r}}$ has volume $b^{-\nu}$. It will be important for the sequel to observe that since $M=b^{\nu+\tau}$ for an integer $\tau\ge1$, if $I=(a/b^r, (a+1)/b^r]$ with $r\leqslant \nu$, then
\begin{equation}\label{h_I-mean}
\frac{1}{M}\sum_{j=1}^{M} h_I\Big(\frac{j}{M}\Big)=0.
\end{equation}
Hence,
\begin{equation*}
\frac{1}{M^d}\sum_{j_1,\ldots, j_d=1}^{M} h_{R}\Big(\frac{j_1}{M},\ldots, \frac{j_d}{M}\Big)=0
\end{equation*}
for every rectangle $R\in\mathcal D^d_{\mathbf{r}}$ where $\mathbf{r}\in\mathbb H^d_{\nu}$.

Finally, define the inner product
\[
\langle f,g\rangle= \frac{1}{M^d}\sum_{j_1,\ldots,j_d=1}^{M}f\Big(\frac{j_1}{M},\cdots, \frac{j_d}{M}\Big)g\Big(\frac{j_1}{M},\cdots, \frac{j_d}{M}\Big)
\]
and denote by $\|\cdot\|_2$ the associated norm. Analogously we can define the norms $\|\cdot\|_1$ and $\|\cdot\|_{\infty}$.

Notice that the collection of functions $\{h_R\}_{R\in\mathcal D^d_{\mathbf r}, \mathbf{r}\in \mathbb H^d_\nu}$ is an orthogonal family with respect to this inner product. 

We are ready to prove Theorem \ref{thm: roth-discreto}.

\begin{proof}[Proof of Theorem \ref{thm: roth-discreto}]
We follow the proof of (\ref{Roth}) given in \cite{Bilyk}. The main difference is that we use a  $b$-adic Haar system, instead of a dyadic one, which allows to take into account the scale based on powers of $b$. We want to identify a suitable test function $F$ and use the estimate
\[
\|\mathscr{D}(A(\cdot), \mathcal P_N)\|_2\geqslant \frac{\langle \mathscr{D}(A(\cdot), \mathcal P_N),F\rangle}{\|F\|_2}.
\]
The test function $F$ will be given by a sum of Haar functions. Let $\mathbf{r}\in\mathbb H^d_\nu$. We first compute the inner product
\[
\langle \mathscr{D}(A(\cdot), \mathcal P_N), h_R\rangle
\]
where $R= I_1\times\cdots\times I_d$ is a $b$-adic box in $\mathcal D^d_{\mathbf{r}}$.

To simplify the notation in the rest of the proof we set $D_N(\cdot)=\mathscr{D}(A(\cdot), \mathcal P_N)$ and we also set $q=(q_1,\ldots, q_d)\in\mathcal P_N$. We have
\begin{align*}
\langle D_N,h_R\rangle &=\frac{1}{M^d}\sum_{j_1,\ldots,j_d=1}^{M} D_{N}\Big(\frac{j_1}{M},\cdots,\frac{j_d}{M}\Big)h_{I_1}\Big(\frac{j_1}{M}\Big)\cdots h_{I_d}\Big(\frac{j_d}{M}\Big)\\
&=\frac{1}{M^d}\sum_{j_1,\cdots,j_d=1}^{M}\sum_{q\in\mathcal P_N}\chi_{\left[0,\frac{j_1}{M}\right)}(q_1)\cdots\chi_{\left[0, \frac{j_d}{M}\right)}(q_d)h_{I_1}\Big(\frac{j_1}{M}\Big)\cdots h_{I_d}\Big(\frac{j_d}{M}\Big) \\
&\qquad -\frac{1}{M^d}\sum_{j_1,\cdots, j_d=1}^{M} N\frac{j_1}{M}\cdots \frac{j_d}{M}h_{I_1}\Big(\frac{j_1}{M}\Big)\cdots h_{I_d}\Big(\frac{j_d}{M}\Big)\\
&= \mathcal{A}-N\mathcal{B}.
\end{align*}
Since $\mathbf{r}\in\mathbb{H}_{\nu}^{d}$, $R\in\mathcal{D}_{\mathbf{r}}^{d}$, if  $R\cap\mathcal{P}_{N}=\emptyset$, then%
\begin{align*}
\mathcal{A}&=\frac{1}{M^{d}}\sum_{j_{1},j_{2},\ldots,j_{d}=1}^{M}\sum_{q\in
\mathcal{P}_{N}}\chi_{\left(  q_{1},1\right]  }\Big(  \frac{j_{1}}{M}\Big)
\cdots\chi_{\left(  q_{d},1\right]  }\Big(  \frac{j_{d}}{M}\Big)
h_{I_1}\Big(\frac{j_1}{M}\Big)\cdots h_{I_d}\Big(\frac{j_d}{M}\Big)  =0.
\end{align*}
Indeed, if $q\in \mathcal P_N$ is not in $R$, then there exists $s\in\{1,\ldots, d\}$ such that $q_s\notin I_s$ and therefore, by \eqref{h_I-mean}, 
\[
\sum_{j_s=1}^{M}\chi_{(q_s,1]}\Big(\frac{j_s}{M}\Big) h_{I_s}\Big(\frac{j_s}{M}\Big)=0.
\]
It remains to compute
\begin{align*}
\mathcal{B}&  =\frac{1}{M^{d}}\sum_{j_{1}=1}^{M}\frac{j_{1}}{M}h_{I_{1}}\Big(
\frac{j_{1}}{M}\Big)  \cdots\sum_{j_{d}=1}^{M}\frac{j_{d}}{M}h_{I_{d}%
}\Big(  \frac{j_{d}}{M}\Big)  .
\end{align*}
Since for $I=(a/b^r, (a+1)/b^r]$ we have
\begin{align*}
\sum_{j=1}^{M}\frac{j}{M}h_{I}\Big(  \frac{j}{M}\Big)&=-\sum_{j=1}^{M}\frac{j}{M}\chi_{\left(  \frac{a}{b^{r}%
},\frac{a}{b^{r}}+\frac{1}{b^{r+1}}\right]  }\Big(  \frac{j}{M}\Big)  +\sum_{j=1}^{M}\frac{j}{M}\chi_{\left(  \frac{a%
}{b^{r}}+\frac{1}{b^{r+1}},\frac{a}{b^{r}}+\frac{2}%
{b^{r+1}}\right]  }\Big(  \frac{j}{M}\Big)  \\
& =-\sum_{j=M\frac{a}{b^{r}}+1}^{M\frac{a}{b^{r}}+\frac
{M}{b^{r+1}}}\frac{j}{M}+\sum_{j=M\frac{a}{b^{r}}%
+\frac{M}{b^{r+1}}+1}^{M\frac{a}{b^{r}}+\frac{2M}{b^{r+1}}%
}\frac{j}{M}\\
&=-\sum_{j=M\frac{a}{b^{r}}+1}^{M\frac{a%
}{b^{r}}+\frac{M}{b^{r+1}}}\frac{j}{M}+\sum_{j=M\frac{a%
}{b^{r}}+1}^{M\frac{a}{b^{r}}+\frac{M}{b^{r+1}}}\frac
{j+\frac{M}{b^{r+1}}}{M}\\
& =\sum_{j=M\frac{a}{b^{r}}+1}^{M\frac{a}{b^{r}}+\frac
{M}{b^{r+1}}}\frac{1}{b^{r+1}}=\frac{1}{b^{r+1}}\Big(  \frac
{M}{b^{r+1}}\Big)  =\frac{M}{b^{2}b^{2r}}=\frac{M}{b^{2}}\left\vert
I\right\vert ^{2},
\end{align*}
we obtain%
\[
\mathcal{B} =\frac{1}{b^{2d}}\left\vert R\right\vert ^{2}=\frac{1}{b^{2d+2\nu}}.
\]
In conclusion, if $\mathbf{r}\in\mathbb{H}_{\nu}^{d}$, $R\in\mathcal{D}_{\mathbf{r}}^{d}$ and $R\cap\mathcal{P}_{N}=\emptyset$, then
\begin{equation}\label{D_N-h_R}
\langle D_N, h_R\rangle = -\frac{N}{b^{2d+2\nu}}.
\end{equation}
Now, for every $\mathbf{r}\in\mathbb{H}_{\nu}^{d}$ define the function
\[
f_{\mathbf{r}}(  x)  =-\sum_{R\in\mathcal{D}_{\mathbf{r}}^{d}%
,R\cap\mathcal{P}_{N}=\emptyset}  h_{R}(  x)  .
\]
Observe that $\|f_{\mathbf{r}}\|_2\leqslant\|f_{\mathbf{r}}\|_\infty\leqslant 1$.
Since there are $b^\nu$ boxes in $\mathcal D_{\mathbf r}^d$ and less than $b^{\nu-1}$ points in $\mathcal P_N$, it follows that \[\operatorname{card}(\{R\in\mathcal{D}_{\mathbf{r}}^{d},R\cap\mathcal{P}%
_{N}=\emptyset\})\geqslant b^\nu-b^{\nu-1}\]
and, therefore,
\begin{align*}
\langle D_N, f_{\mathbf{r}}\rangle &= -\sum_{R\in\mathcal{D}_{\mathbf{r}}^{d},R\cap\mathcal{P}%
_{N}=\emptyset}\langle D_N, h_{R}\rangle
=\sum
_{R\in\mathcal{D}_{\mathbf{r}}^{d},R\cap\mathcal{P}_{N}=\emptyset}\frac{N}{b^{2d+2\nu}}\\
& \geqslant\frac{b^{\nu-2}}{b^{2d+2\nu}}(  b^\nu-b^{\nu-1}) 
 =\frac{b-1}{b^{2d+3}}.
\end{align*}
Finally, set
\[
F(  x)  =\sum_{\mathbf{r}\in\mathbb{H}_{\nu}^{d}}f_{\mathbf{r}%
}(  x)  .
\]
We have, by the orthogonality of the $f_{\mathbf{r}}$'s, that
\begin{align*}
\|F\|^2_2&=\sum_{\mathbf{r}%
\in\mathbb{H}_{\nu}^{d}}\|f_{\mathbf{r}}\|^2_2\leqslant \operatorname{card}(  \mathbb{H}_{\nu}^{d}).
\end{align*}
In conclusion, since%
\begin{align}\label{D_NF}
\langle D_N, F\rangle&= \sum_{\mathbf{r}\in\mathbb H^d_\nu} \langle D_N, f_{\mathbf{r}}\rangle\geqslant \sum_{\mathbf{r}\in\mathbb H^d_\nu}\frac{b-1}{b^{2d+3}}=\operatorname{card}(  \mathbb{H}_{\nu}^{d})\frac{b-1}{b^{2d+3}},
\end{align}
we get
\begin{align*}
\operatorname{card}(  \mathbb{H}_{\nu}^{d})\frac{b-1}{b^{2d+3}}  &\leqslant \langle D_N, F\rangle \leqslant \|D_{N}\|_2 \|F\|_2\leqslant  \big(\operatorname{card}(  \mathbb{H}_{\nu}^{d})\big)^{\frac12}\|D_{N}\|_2.
\end{align*}
Therefore,
\begin{align*}
\| D_N\|^2_2&\geqslant \operatorname{card}(  \mathbb{H}_{\nu}^{d})\left(  \frac{b-1}{b^{2d+3}}\right)  ^{2}\geqslant \frac{\nu^{d-1}}{(d-1)!}\left(  \frac{b-1}{b^{2d+3}}\right)  ^{2} \geqslant \frac{1}{(d-1)!}\left(  \frac{b-1}{b^{2d+3}}\right)  ^{2}\big(  \log
_{b}(  N)  \big)  ^{d-1}.
\end{align*}
This concludes the proof of the first part of the theorem. Let us now set $d=2$. Following \cite[Chapter 6]{M}, we want to exploit the inequality
\begin{align*}
|\langle D_N, G\rangle| &\leqslant \|D_N\|_{\infty}\| G\|_{1}.
\end{align*}
As test function $G$ we take 
\[
G(  x) =\prod_{\mathbf{r}\in\mathbb{H}_{\nu}^{2}}\big(1+\kappa f_{\mathbf{r}%
}(  x)\big)-1 =\prod_{r=0}^{\nu} \big(1+\kappa f_{r%
}(  x)\big)-1 ,
\]
where $\mathbf{r}=(r,\nu-r)$  and we identify $f_r(x)$ with $f_{(r,\nu-r)}(x)$, and $\kappa\in (0,1)$ is a free parameter that will be chosen later. Expanding the above product, we can write 
\begin{align*}
G(x)&=\kappa\sum_{r=0}^\nu f_r(x)+\sum_{\ell\geqslant 2} \kappa^\ell G_\ell(x)= \kappa G_1+\sum_{\ell\geqslant 2} \kappa^\ell G_\ell(x),
\end{align*}
where  
\[
G_\ell(x)=\sum_{0\leqslant r_1< r_2< \ldots< r_\ell\leqslant \nu}f_{r_1}(x)f_{r_2}(x)\cdots f_{r_\ell}(x).
\]
Let us consider a single product $f_{r_1}\cdots f_{r_\ell}$. This is a sum of terms of the form
$
h_{R_{r_1}}\cdots h_{R_{r_\ell}},
$
where each $R_r$ is a rectangle in $\mathcal D^d_{(r,\nu-r)}$ not containing any point of $\mathcal P_N$, say
\[
R_r=\Big(\frac{a_r}{b^r},\frac{a_r+1}{b^{r}}\Big]\times\Big(\frac{a'_r}{b^{\nu-r}},\frac{a'_r+1}{b^{\nu-r}}\Big]= I^{(1)}_r\times I^{(2)}_{\nu-r}.
\]
We have
\[
(h_{R_{r_1}}\cdots h_{R_{r_\ell}})(x_1,x_2)= \prod_{i=1}^{\ell} h_{I^{(1)}_{r_i}}(x_1)\prod_{k=1}^{\ell} h_{I^{(2)}_{\nu-r_k}}(x_2).
\]
Since $ r_1< r_2< \ldots< r_\ell $, this is either $0$ or $\pm h_{I^{(1)}_{r_\ell}}(x_1)h_{I^{(2)}_{\nu-r_1}}(x_2)$, and
by \eqref{h_I-mean} it follows  that \[\sum_{j_1,j_2=1}^{M} G_{\ell}\Big(\frac{j_1}{M},\frac{j_2}{M}\Big)=0.\]
In particular, for future reference, observe that different choices of the family $R_{r_1},\ldots, R_{r_\ell}$ yield disjoint intersections $R_{r_1}\cap\cdots \cap R_{r_\ell}$. It follows that each $I^{(1)}_{r_\ell}\times I^{(2)}_{\nu-r_1}$ comes from at most one choice of the family  $R_{r_1},\cdots, R_{r_\ell}$. Therefore $f_{r_1}\cdots f_{r_\ell}$ is the sum of at most $b^{\nu+r_\ell-r_1}$ terms of the form $\pm h_{I^{(1)}_{r_\ell}}(x_1)h_{I^{(2)}_{\nu-r_1}}(x_2)$.

Now, since $\|f_{\mathbf r}\|_\infty\le1$ and $0<\kappa<1$,
\begin{align*}
\|G\|_{1}  &\leqslant \frac{1}{M^2}\sum_{j_1,j_2=1}^{M} \prod_{\mathbf{r}\in\mathbb{H}_{\nu}^{2}}\Big(1+\kappa f_{\mathbf{r}%
}\Big( \frac{j_1}M,\frac{j_2}M \Big)\Big)+1 
=2+\frac{1}{M^{2}}\sum_{j_{1},j_{2}=1}^{M}\sum_{\ell=1}^{\nu} \kappa^\ell G_{\ell}\Big(  \frac{j_{1}}{M}%
,\frac{j_{2}}{M}\Big)   =2.
\end{align*}
Let us focus on
\begin{align*}
\langle D_{N}, G\rangle&=\sum_{\ell=1}^{\nu}\kappa^\ell \langle D_N, G_\ell\rangle.
\end{align*}
Notice that, since $G_1=F$, we have as in \eqref{D_NF} 
\[
\langle D_N, G_1\rangle \geqslant\operatorname{card}(  \mathbb{H}_{\nu}^{2})\frac{b-1}{b^{7}}.
\]
For the other terms we proceed as follows. We know that on each rectangle $R$ of the $b^{r_\ell} \times b^{\nu-r_1}$ grid, the product
$f_{r_1}f_{r_2}
\cdots f_{r_\ell}$ either is 0 or looks like $\pm h_R$; in the latter case, $R$ contains no
points of $\mathcal{P}_N$  and therefore, as in \eqref{D_N-h_R},
\[
\langle D_N, h_R\rangle = -\frac{N}{b^4}|R|^2.
\]
Then, 
\begin{align*}
\sum_{\ell=2}^\nu\kappa^\ell\big|\langle G_\ell, D_N\rangle\big|&\leqslant \sum_{\ell=2}^{\nu}\kappa^\ell \sum_{0\leqslant r_1<\cdots<r_\ell\leqslant \nu}\big|\langle f_{r_1}\cdots f_{r_\ell}, D_N\rangle\big|\\
&\leqslant \sum_{\ell=2}^{\nu}\kappa^\ell \sum_{0\leqslant r_1<\cdots<r_\ell\leqslant \nu}\sum_{\substack{R \text{ in the }\\ b^{r_\ell}\times b^{\nu-r_1} \text{ grid}}} \big|\langle h_{R}, D_N\rangle\big|\leqslant \sum_{\ell=2}^\nu \kappa^\ell \sum_{0\leqslant r_1<\cdots<r_\ell\leqslant \nu} Nb^{-\nu-r_\ell+r_1-4}\\
&\leqslant \sum_{\ell=2}^\nu \kappa^\ell \sum_{r_1=0}^{\nu-\ell+1} \sum_{t=\ell-1}^{\nu-r_{1}} \sum_{ r_1<r_2<\cdots<r_{\ell-1}<r_1+t } Nb^{-\nu-t-4}\\
&\leqslant \sum_{\ell=2}^\nu \kappa^\ell \sum_{r_1=0}^{\nu} \sum_{t=\ell-1}^{\nu}  Nb^{-\nu-t-4} \binom{t-1}{\ell-2}= N\sum_{r_1=0}^{\nu} \sum_{t=1}^{\nu} b^{-\nu-t-4}\sum_{\ell=2}^{t+1} \kappa^\ell  \binom{t-1}{\ell-2}\\
&= N\kappa^2\sum_{r_1=0}^{\nu} \sum_{t=1}^{\nu} b^{-\nu-t-4}\sum_{\ell=0}^{t-1} \kappa^\ell  \binom{t-1}{\ell}
= Nb^{-\nu-4}\kappa^2\sum_{r_1=0}^{\nu} \sum_{t=1}^{\nu} b^{-t}(1+\kappa)^{t-1}\\
&= (Nb^{-\nu+1})b^{-6}\kappa^2\sum_{r_1=0}^{\nu} \sum_{t=1}^{\nu} \left(\frac{1+\kappa}{b}\right)^{t-1}\leqslant b^{-6}\kappa^2(\nu+1)  \frac{b}{b-1-\kappa}.
\end{align*}
It follows that
\[\|D_N\|_{\infty} \geqslant \frac{|\langle D_N,G\rangle|}{\|G\|_1}\geqslant \kappa(\nu+1)\frac{(b-1)b^{-7}-\kappa b^{-5}/(b-1-\kappa)}{2}\geqslant \widetilde c\log(N)
\]
for every $N\geqslant 2$ if $\kappa$ is sufficiently small. 
\end{proof}

We conclude the section pointing out the following.

\begin{remark}\label{rmk-roth-discreto}
If $b$ is a power of a prime number, $b\geqslant d-1$, then by \cite[p. 125]{D-P} there exist point sets of $b^{\nu-2}$ points with zero discrepancy with
respect to all boxes in
\[
\bigcup\limits_{\mathbf{r\in}\mathbb{H}_{\nu-2}^{d}}
\mathcal{D}_{\mathbf{r}}^{d},
\]
that is, all $b$-adic boxes of volume
$b^{-\nu+2}$. Theorem \ref{thm: roth-discreto} shows that there exists a not necessarily $b$-adic box on the
slightly tighter grid $b^{-\nu-1}\mathbb{Z}^{d}$ with at least logarithmic
discrepancy.  
\end{remark}

\section{Proof of the lower bound for squares}

To simplify the notation for the computations, we set $D_N(x,s)=\mathscr{D}(  x+Q(s), \mathcal P_N) $ and $
j= (j_1,\ldots,j_d)$,
so that our aim is to estimate the quantity
\[
\|D_N\|^2_2=\frac{1}{(\operatorname{card} \mathbb J_M)^d}\frac{1}{\operatorname{card}\mathbb S_M}\sum_{j\in \mathbb J^d_M}\sum_{s\in\mathbb S_M}| D_N(M^{-1}j,s)|^2.
\]
(Notice that the same symbol $\|\cdot\|_{2}$ denoted a different $L^2$ norm in the previous section.
)

Observe that if $j\in\mathbb{J}_M^d$ and $s\in\mathbb{S}_M$, then $D_N(M^{-1}j,s)$ does not change if we substitute every point $p_n=(p_{n,1},\ldots,p_{n,d})$ with the point $\widetilde{p_n}=(\lfloor Mp_{n,1}\rfloor/M,\ldots,\lfloor Mp_{n,d}\rfloor/M  )$. Therefore we can assume that $p_n=M^{-1}z_n$ with $z_n\in \mathbb{J}_M^d$.

Let $F:M^{-1}\mathbb{J}_M^d\to \mathbb{C}$ and define the discrete Fourier transform of $F$ by
\[
\widehat{F}(k)=\sum_{m \in \mathbb J_M^d}F(M^{-1}m)  \exp(  -2\pi ik %
\cdot M^{-1}m).
\]
Observe that $\widehat{F}$ is periodic, that is for every $h\in\mathbb{Z}^d$ we have $\widehat{F}(k+Mh)=\widehat{F}(k)$.  Furthemore, we have the inversion formula,
\[F(M^{-1}m)=\frac{1}{M^d}\sum_{k \in \mathbb J_M^d}\widehat{F}(k)  \exp(  2\pi ik %
\cdot M^{-1}m)\]
and the Plancherel identity (see \cite[Chapter 7]{SS})
\begin{equation}\label{Plancherel}\sum_{m\in\mathbb{J}_M^d}\vert
F( M^{-1} m)  \vert ^{2}=\frac{1}{M^{d}}\sum_{k
\in\mathbb{J}_M^d}\vert \widehat{F}(
k)  \vert ^{2}.\end{equation}
In the next two lemmas we compute the discrete Fourier transform of the discrepancy in order to use it to compute $\|D_N\|_2$.
\begin{lemma}\label{FT}
For every $s\in\mathbb S_M$, the discrete Fourier transform of the function $D_N(\cdot,s)$ is given by
\[\widehat{D}_N(  k,s)=\begin{cases}
\displaystyle\sum_{n=1}^{N}\exp(
-2\pi ik\cdot M^{-1}z_n)  \widehat{\chi}_{-Q(s)  }(  k) & \text{if } k\in\mathbb{J}_M^d\setminus\{0\},\\[3mm]
0 & \text{if } k=0.
\end{cases}\]  
\end{lemma}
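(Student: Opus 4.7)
The plan is to substitute the definition of $D_N$ into the discrete Fourier transform and evaluate the resulting double sum directly. Writing
\[
\widehat{D}_N(k,s) = \sum_{m\in\mathbb{J}_M^d}\Big(\sum_{n=1}^N \chi_{M^{-1}m+Q(s)}(M^{-1}z_n) - N(2s)^d\Big)\exp(-2\pi i k\cdot M^{-1}m),
\]
I would split the calculation into a \emph{counting term} and a \emph{mean term}, and treat them separately. The mean term is $-N(2s)^d\sum_{m\in\mathbb{J}_M^d}\exp(-2\pi i k\cdot M^{-1}m)$, which vanishes for $k\in\mathbb{J}_M^d\setminus\{0\}$ by the standard orthogonality of characters on $(\mathbb Z/M\mathbb Z)^d$, and equals $-N(2s)^d M^d$ when $k=0$.

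For the counting term, after swapping the finite sums in $n$ and $m$, I would use the key observation that on the torus $\chi_{M^{-1}m+Q(s)}(M^{-1}z_n) = \chi_{Q(s)}(M^{-1}(z_n-m))$, so that the inner $m$-sum becomes a convolution-type expression depending only on $z_n-m$ modulo $M$. The substitution $u = z_n-m \pmod{M}$ is a bijection of $\mathbb{J}_M^d$ onto itself, yielding
\[
\sum_{m\in\mathbb{J}_M^d}\chi_{Q(s)}(M^{-1}(z_n-m))\exp(-2\pi i k\cdot M^{-1}m) = \exp(-2\pi i k\cdot M^{-1}z_n)\,\widehat{\chi}_{-Q(s)}(k),
\]
where the identification $\widehat{\chi}_{Q(s)}(-k) = \widehat{\chi}_{-Q(s)}(k)$ follows from one further sign change on the summation index, viewed on $(\mathbb Z/M\mathbb Z)^d$. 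Summing over $n=1,\ldots,N$ produces the asserted formula for $k\neq 0$.

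It then remains to verify the $k=0$ case. Since $s\in\mathbb{S}_M$ means $s = r/M$ for an integer $r$ with $1\leqslant r\leqslant M/2-1$, the set $-Q(s) = (-s,s]^d + \mathbb Z^d$ contains exactly $(2sM)^d = (2r)^d$ points of $M^{-1}\mathbb{J}_M^d$, so $\widehat{\chi}_{-Q(s)}(0) = (2sM)^d$. The counting term then contributes $N(2sM)^d$, which cancels exactly with the mean term $-N(2s)^d M^d$, giving $\widehat{D}_N(0,s) = 0$ as claimed.

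The main obstacle is purely bookkeeping: one must check carefully that the change of variables $u = z_n - m\pmod{M}$ really defines a bijection of $\mathbb{J}_M^d$ despite the asymmetric choice of representatives $\{-M/2,\ldots,M/2-1\}$, and that the distinction between the half-open intervals defining $Q(s) = [-s,s)^d$ and $-Q(s) = (-s,s]^d$ does not affect lattice-point counts, thanks to the fact that $sM\in\mathbb Z$ for $s\in\mathbb{S}_M$. Once these conventions are fixed, the computation is essentially the standard discrete convolution identity, translated to the setting of $(\mathbb Z/M\mathbb Z)^d$.
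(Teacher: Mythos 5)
Your proposal is correct and follows essentially the same route as the paper: substitute the definition of $D_N$, split into the counting term and the mean term, handle the mean term by orthogonality of characters, and handle the counting term by the change of variables on the torus together with periodicity modulo $M$. The only cosmetic difference is that you reach $\widehat{\chi}_{-Q(s)}(k)$ via the intermediate identity $\widehat{\chi}_{Q(s)}(-k)=\widehat{\chi}_{-Q(s)}(k)$, whereas the paper performs a single substitution $m\mapsto m+z_n$ that produces $\chi_{-Q(s)}$ directly; both are valid and your bookkeeping concerns about the asymmetric representative set $\mathbb{J}_M$ are resolved exactly as you indicate, by periodicity mod $M$.
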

\begin{proof}
By the definition of discrete Fourier transform,
\begin{align*}
\widehat{D}_N(  k,s)   &  =\sum_{m \in \mathbb J_M^d}D_N(M^{-1}m,s)  \exp(  -2\pi ik %
\cdot M^{-1}m) \\
&=\sum_{m \in \mathbb J_M^d}\left(\sum_{n=1}^{N}\chi_{M^{-1}m+Q(s)  }(  p_{n})  -(2s)  ^{d}N \right)  \exp(  -2\pi ik %
\cdot M^{-1}m)\\
&  =\sum_{m \in \mathbb J_M^d}\sum_{n=1}^{N}\chi_{M^{-1}m+Q(
s)  }(  M^{-1}z_n)  \exp(  -2\pi ik\cdot
M^{-1}m) \\
&\quad  -\sum_{m \in \mathbb J_M^d}(  2s)  ^{d}N\exp(  -2\pi
ik\cdot M^{-1}m) \\
&  =\sum_{n=1}^{N}\sum_{m \in \mathbb J_M^d}\chi_{Q(  s)  }\big(
M^{-1}(z_{n}-m)\big)  \exp(  -2\pi ik\cdot
M^{-1}m)  -(  2s)  ^{d}NM^{d}\delta_0(
k)  .
\end{align*}

Then,
\begin{align*}
&  \sum_{m \in \mathbb J_M^d}\chi_{Q(  s)  }\big(
M^{-1}(z_{n}-m)\big)  \exp(  -2\pi ik\cdot
M^{-1}m)   \\
  = & \sum_{m \in \mathbb J_M^d}\chi_{-Q(s)  }(
M^{-1}m)  \exp(  -2\pi ik\cdot M^{-1}(
m+z_n)) \\
 = &  \exp(  -2\pi ik\cdot M^{-1}z_n)  \sum_{m \in \mathbb J_M^d}\chi_{-Q(s)  }(  M^{-1}m%
)  \exp(  -2\pi ik\cdot M^{-1}m) \\
  = & \exp(  -2\pi ik\cdot M^{-1}z_n)  \widehat{\chi
}_{-Q(s)  }(k)
\end{align*}
and, therefore,
\[
\widehat{D}_N(  k,s)  =\sum_{n=1}^{N}\exp(
-2\pi ik\cdot M^{-1}z_n)  \widehat{\chi}_{-Q(s)  }(  k)  -(  2s)  ^{d}NM^{d}%
\delta_0(k)  .
\]
Moreover,
\[
\widehat{\chi}_{-Q(s)  }(0)  =\sum_{m \in \mathbb J_M^d}\chi_{-Q(s)  }(  M^{-1}m%
)  =(  2s)  ^{d}M^{d}.
\]
\end{proof}
\begin{lemma}\label{conto_importante}
\[
\|D_N\|^2_2=\frac{1}{M^{2d}(M/2-1)}\sum_{0\neq k
\in\mathbb{J}_M^d}\left|  \sum_{n=1}%
^{N}\exp(  -2\pi i k\cdot M^{-1}z_{n})  \right|
^{2}\sum_{s\in\mathbb{S}_M}\left|  \widehat{\chi}_{-Q(s)  }(
k)  \right|  ^{2}.
\]
\end{lemma}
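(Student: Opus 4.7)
The plan is to unfold the definition of $\|D_N\|_2^2$, apply the Plancherel identity \eqref{Plancherel} in the variable $j$ for each fixed $s$, and then substitute the explicit formula for $\widehat{D}_N(k,s)$ from Lemma \ref{FT}. Since $\operatorname{card}\mathbb{J}_M = M$ and $\operatorname{card}\mathbb{S}_M = M/2-1$, I would first rewrite
\[
\|D_N\|_2^2 = \frac{1}{M^d(M/2-1)}\sum_{s\in\mathbb{S}_M}\sum_{j\in\mathbb{J}_M^d}|D_N(M^{-1}j,s)|^2.
\]
For each fixed $s\in\mathbb{S}_M$, the function $j\mapsto D_N(M^{-1}j,s)$ is a function on $M^{-1}\mathbb{J}_M^d$, so \eqref{Plancherel} gives
\[
\sum_{j\in\mathbb{J}_M^d}|D_N(M^{-1}j,s)|^2 = \frac{1}{M^d}\sum_{k\in\mathbb{J}_M^d}|\widehat{D}_N(k,s)|^2.
\]

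Next, I would plug in Lemma \ref{FT}. Since $\widehat{D}_N(0,s)=0$, the $k=0$ term drops out and, for $k\neq 0$,
\[
|\widehat{D}_N(k,s)|^2 = \Bigl|\sum_{n=1}^N \exp(-2\pi i k\cdot M^{-1}z_n)\Bigr|^2\,|\widehat{\chi}_{-Q(s)}(k)|^2.
\]
The first factor is independent of $s$, so after swapping the order of summation between $k$ and $s$ the desired identity
\[
\|D_N\|_2^2 = \frac{1}{M^{2d}(M/2-1)}\sum_{0\neq k\in\mathbb{J}_M^d}\Bigl|\sum_{n=1}^N\exp(-2\pi i k\cdot M^{-1}z_n)\Bigr|^2\sum_{s\in\mathbb{S}_M}|\widehat{\chi}_{-Q(s)}(k)|^2
\]
drops out.

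There is really no obstacle here beyond verifying that $\operatorname{card}\mathbb{S}_M = M/2-1$ (so the normalization is correct), and being careful that the reduction at the start of the section (replacing each $p_n$ by its nearest grid point $M^{-1}z_n$) places us in exactly the setting where Lemma \ref{FT} applies. After that, the identity is a direct two-line consequence of Plancherel combined with the factorization of $\widehat{D}_N(k,s)$ as an $s$-independent trigonometric sum times $\widehat{\chi}_{-Q(s)}(k)$.
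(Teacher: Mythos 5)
Your proposal is correct and follows exactly the same route as the paper: apply the discrete Plancherel identity \eqref{Plancherel} in the $j$-variable for each fixed $s$, substitute the factorized form of $\widehat{D}_N(k,s)$ from Lemma \ref{FT} (which kills the $k=0$ term), pull the $s$-independent exponential-sum factor out, and normalize by $\operatorname{card}\mathbb{J}_M^d \cdot \operatorname{card}\mathbb{S}_M = M^d(M/2-1)$. Nothing is missing.
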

\begin{proof} It follows from the Plancherel identity \eqref{Plancherel} and Lemma \ref{FT} that
\begin{align*}
&\sum_{s\in\mathbb{S}_M}\sum_{j\in\mathbb{J}_M^d}\vert
D_N(  M^{-1}j,s)  \vert ^{2}
 = \frac{1}{M^{d}}\sum_{s\in\mathbb{S}_M}\sum_{k
\in\mathbb{J}_M^d}\vert \widehat{D}_N(
k,s)  \vert ^{2}\nonumber\\
&  = \frac{1}{M^{d}}\sum_{0\neq k
\in\mathbb{J}_M^d}\left|  \sum_{n=1}%
^{N}\exp(  -2\pi i k\cdot M^{-1}z_{n})  \right|
^{2}\sum_{s\in\mathbb{S}_M}\left|  \widehat{\chi}_{-Q(s)  }(
k)  \right|  ^{2}.
\end{align*}
\end{proof}
To proceed with the proof we now focus on the quantity 
\begin{equation}\label{discrete-plancherel}
\sum_{s\in\mathbb{S}_M}\left|  \widehat{\chi}_{-Q(s)  }(
k)  \right|^2=\sum_{r=1}^{M/2-1}\left|  \widehat{\chi}_{-Q(
r/M)  }(k)  \right|  ^{2}.
\end{equation}
We will estimate from below such quantity in Proposition \ref{prop7}, but in order to do it we need a series of preliminary lemmas in which we prove a more explicit formulation of \eqref{discrete-plancherel}.
\medskip

\begin{convention} \label{conv:seno}From now on we assume that for every integer $r>0$ the
function
\[
\frac{\sin( \pi rx) }{\sin( \pi x)  }%
\]
is extended by continuity at the integers and therefore it takes the value
\[
r  \frac{\cos(\pi r x)}{\cos(\pi x)}=
r(-1)^{(r-1)x}%
\]
when $x\in\mathbb{Z}$.
\end{convention}

\medskip

\begin{lemma}
Let $r=1,\ldots,M/2-1$ and let $k=(k_1,\ldots,k_d)\in\mathbb{J}_M^d$. Then
\[
\left|  \widehat{\chi}_{-Q\left(  r/M\right)  }(k)
\right|  =\prod_{u=1}^{d}\left|  \frac{\sin(  2\pi k_{u}r/M)
}{\sin(  \pi k_{u}/M)  }\right|  .
\]

\end{lemma}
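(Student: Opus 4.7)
The plan is to exploit the product structure of the set $-Q(r/M)$, which reduces the $d$-dimensional discrete Fourier transform to a product of one-dimensional exponential sums, and then evaluate each factor via a geometric series.

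First I would identify the discrete support: since $Q(s) = [-s,s)^d+\mathbb Z^d$, we have $-Q(r/M) = (-r/M, r/M]^d + \mathbb Z^d$. For $m \in \mathbb J_M^d$, the point $M^{-1}m$ lies in $[-1/2,1/2)^d$, and because $r<M/2$ the box $(-r/M,r/M]$ is contained in $[-1/2,1/2)$ with no wrap-around. Hence $\chi_{-Q(r/M)}(M^{-1}m)=1$ precisely when each $m_u \in \{-r+1,-r+2,\ldots,r\}$. Consequently
\[
\widehat\chi_{-Q(r/M)}(k) \;=\; \prod_{u=1}^d \sum_{m=-r+1}^{r} \exp(-2\pi i k_u m/M).
\]

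Next I would evaluate the one-dimensional sum. Setting $\omega = \exp(-2\pi i k_u/M)$, if $\omega\ne 1$ (that is, $k_u\ne 0$) a geometric series computation gives
\[
\sum_{m=-r+1}^{r}\omega^{m} \;=\; \omega^{-r+1}\,\frac{\omega^{2r}-1}{\omega-1} \;=\; \omega\,\frac{\omega^{r}-\omega^{-r}}{\omega-1}.
\]
Taking absolute values and using $|e^{i\theta}-e^{-i\theta}|=2|\sin\theta|$ yields
\[
\left|\sum_{m=-r+1}^{r}\omega^{m}\right| \;=\; \frac{|\omega^{r}-\omega^{-r}|}{|\omega-1|} \;=\; \left|\frac{\sin(2\pi k_u r/M)}{\sin(\pi k_u/M)}\right|.
\]

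Finally, I would address the degenerate case $k_u=0$. Here the sum directly equals $2r$, while the right-hand side $\sin(2\pi k_u r/M)/\sin(\pi k_u/M)$ is interpreted through Convention \ref{conv:seno} applied with $R=2r$ and $x=k_u/M=0$, giving $2r\cdot \cos(0)/\cos(0)=2r$, so the two expressions agree. Taking the product over $u=1,\ldots,d$ then yields the claimed formula. There is no serious obstacle here; the only subtlety is tracking the half-open convention in the definition of $-Q(s)$ to correctly enumerate the indices $\{-r+1,\ldots,r\}$ (rather than $\{-r,\ldots,r-1\}$), which is what makes the sum symmetric enough to recognize $\sin(2\pi k_u r/M)/\sin(\pi k_u/M)$ as its modulus.
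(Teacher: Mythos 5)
Your proof is correct and follows essentially the same route as the paper: factor $\widehat{\chi}_{-Q(r/M)}(k)$ into one-dimensional sums over $m_u\in\{-r+1,\ldots,r\}$, evaluate each by a geometric series to produce the ratio of sines, and invoke Convention~\ref{conv:seno} for $k_u=0$. The only cosmetic difference is that you take absolute values of $\omega(\omega^r-\omega^{-r})/(\omega-1)$ directly, whereas the paper first simplifies to the phase-times-ratio form $\exp(-\pi i k_u/M)\,\sin(2\pi k_u r/M)/\sin(\pi k_u/M)$ before taking modulus; you are also slightly more explicit about why the support is exactly $\{-r+1,\ldots,r\}$.
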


\begin{proof}
We have
\begin{align*}
\widehat{\chi}_{-Q\left(  r/M\right)  }(k)   &  =
\sum_{m \in \mathbb J_M^d}\chi_{-Q\left(  r/M\right)  }(  M^{-1}%
m)  \exp(  -2\pi ik\cdot M^{-1}m) \\
&  = \prod_{u=1}^{d}\sum_{m_{u}=-r+1}^{r}\exp(  -2\pi ik_{u}%
m_{u}/M)  =\prod_{u=1}^{d}\Phi_{r}(  k_{u})  ,
\end{align*}
where \[\Phi_{r}(  k_u)  =\sum_{m=-r+1}^{r}\exp(  -2\pi
ik_um/M)  .\] 
Let now $k_u\neq0$. Then
\begin{align*}
\Phi_{r}(k_u)   &  = \sum_{m=-r+1}^{r}\exp(  -2\pi ik_u/M)
^{m}=\sum_{m=0}^{2r-1}\exp(  -2\pi ik_u/M)  ^{m-r+1}\\
&  = \exp(  -2\pi ik_u/M)  ^{-r+1}\sum_{m=0}^{2r-1}\exp(  -2\pi
ik_u/M)  ^{m}\\
&  = \exp(  -2\pi ik_u/M)  ^{-r+1}\frac{1-\exp(  -2\pi
ik_u/M)  ^{2r}}{1-\exp(  -2\pi ik_u/M)  }\\
&  = \frac{\exp(  -2\pi ik_u/M)  ^{-r+1}-\exp(  -2\pi
ik_u/M)  ^{r+1}}{1-\exp(  -2\pi ik_u/M)  }\\
&  = \frac{\exp(  -2\pi ik_u/M)}{\exp(-\pi ik_u/M)} \cdot \frac{\exp(  -2\pi ik_u/M)
^{-r}-\exp(  -2\pi ik_u/M)  ^{r}}{\exp(  \pi ik_u/M)
-\exp(  -\pi ik_u/M)  }\\
&  = \exp(  -\pi ik_u/M)  \frac{\sin(  2\pi k_ur/M)  }%
{\sin(  \pi k_u/M)  }.
\end{align*}
Observe that, by Convention \ref{conv:seno}, when $k_u=0$ we obtain the correct value $2r$. Hence
\[
\left|  \widehat{\chi}_{-Q\left(  r/M\right)  }(k)
\right|  =\prod_{u=1}^{d}\left|  \frac{\sin(  2\pi k_{u}j/M)
}{\sin(  \pi k_{u}/M)  }\right|
\]
for every $k\in\mathbb{J}_M^d$.
\end{proof}

\begin{lemma}
\label{lem:Somma Trasformata}Let $k\in\mathbb{J}_M^d$, and assume that
$k_{1}\cdots k_{h}\neq0$ and $k_{h+1},\ldots, k_{d}=0$, for some $h\geqslant 1$. Then
\begin{equation}\label{eq:somma_trasformata}
\sum_{r=1}^{M/2-1}\left|  \widehat{\chi}_{-Q(  r/M)  }(
k)  \right|  ^{2}=\frac{4^{d}}{2^{3h}\prod_{u=1}^{h}\sin
^{2}(  \pi k_{u}/M)  }\sum_{r=1}^{M/2-1}r^{2d-2h}\prod_{u=1}%
^{h}\big(  1-\cos(  4\pi k_{u}r/M)  \big).
\end{equation}

\end{lemma}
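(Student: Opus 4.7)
The lemma is an entirely computational consequence of the previous one, so the plan is straightforward: start from the formula
\[
\left|  \widehat{\chi}_{-Q\left(  r/M\right)  }(k)\right|^{2}=\prod_{u=1}^{d}\left|  \frac{\sin(  2\pi k_{u}r/M)}{\sin(  \pi k_{u}/M)  }\right|^{2},
\]
split the product according to whether $u\leqslant h$ (where $k_u\neq 0$) or $u>h$ (where $k_u=0$), simplify each part separately, sum in $r$, and collect constants.

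For the indices $u>h$, the ratio $\sin(2\pi k_u r/M)/\sin(\pi k_u/M)$ is of the indeterminate form $0/0$, and Convention \ref{conv:seno} (applied with the positive integer $2r$ in place of $r$ and with $x=k_u/M=0$) assigns it the value $2r\,(-1)^{(2r-1)\cdot 0}=2r$. Hence each such factor contributes $(2r)^{2}$ to $|\widehat{\chi}_{-Q(r/M)}(k)|^{2}$, and the block $u=h+1,\ldots,d$ yields in total $(2r)^{2(d-h)}=4^{d-h}r^{2(d-h)}$.

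For the indices $u\leqslant h$, apply the identity $2\sin^{2}\theta=1-\cos(2\theta)$ with $\theta=2\pi k_u r/M$ to rewrite
\[
\sin^{2}(2\pi k_u r/M)=\tfrac{1}{2}\bigl(1-\cos(4\pi k_u r/M)\bigr),
\]
so that the product over $u=1,\ldots,h$ becomes
\[
\prod_{u=1}^{h}\frac{\sin^{2}(2\pi k_u r/M)}{\sin^{2}(\pi k_u/M)}=\frac{1}{2^{h}\prod_{u=1}^{h}\sin^{2}(\pi k_u/M)}\prod_{u=1}^{h}\bigl(1-\cos(4\pi k_u r/M)\bigr).
\]
Multiplying the two blocks, one obtains
\[
\left|\widehat{\chi}_{-Q(r/M)}(k)\right|^{2}=\frac{4^{d-h}\,r^{2(d-h)}}{2^{h}\prod_{u=1}^{h}\sin^{2}(\pi k_u/M)}\prod_{u=1}^{h}\bigl(1-\cos(4\pi k_u r/M)\bigr),
\]
and summing over $r=1,\ldots,M/2-1$ gives \eqref{eq:somma_trasformata} after simplifying $4^{d-h}/2^{h}=4^{d}/2^{3h}$.

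There is no real obstacle beyond bookkeeping; the only subtle point worth emphasising is that the convention is needed in order to assign the correct value $2r$ to the $0/0$ factors that appear exactly when $k_u=0$, without which the product formula of the preceding lemma would be ill-defined on the coordinate axes of $\mathbb{J}_M^d$.
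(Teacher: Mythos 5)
Your proof is correct and follows essentially the same route as the paper's: apply the product formula from the preceding lemma, use Convention \ref{conv:seno} to get the factor $2r$ for each index with $k_u=0$, rewrite $\sin^2(2\pi k_u r/M)$ via the double-angle identity for the nonzero indices, and collect the constant $4^{d-h}/2^h = 4^d/2^{3h}$. The only cosmetic difference is that you spell out the application of the convention in detail, whereas the paper takes it for granted.
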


\begin{proof}
We have
\begin{align*}
\sum_{r=1}^{M/2-1}&\left\vert \widehat{\chi}_{-Q\left(  r/M\right)  }(k)  \right\vert ^{2}  =\sum_{r=1}^{M/2-1}\prod_{u=1}%
^{h}\left\vert \frac{\sin(  2\pi k_{u}r/M)  }{\sin(  \pi
k_{u}/M)  }\right\vert ^{2}(  2r)  ^{2d-2h}\\
&  =\frac{1}{\prod_{u=1}^{h}\left\vert \sin(  \pi k_{u}/M)
\right\vert ^{2}}\sum_{r=1}^{M/2-1}\left(  2r\right)  ^{2d-2h}\prod_{u=1}%
^{h}\left\vert \sin(  2\pi k_{u}r/M)  \right\vert ^{2}\\
&  =\frac{4^{d}}{2^{3h}\prod_{u=1}^{h}\left\vert \sin(  \pi
k_{u}/M)  \right\vert ^{2}}\sum_{r=1}^{M/2-1}r^{2d-2h}\prod_{u=1}%
^{h}\big(  1-\cos(  4\pi k_{u}r/M)  \big) .
\end{align*}

\end{proof}
The next lemma is essential in order to use Lemma \ref{lem:Somma Trasformata} to estimate \eqref{discrete-plancherel} from below.

\begin{lemma}
\label{groups}Let $0<\varepsilon<1/2$ and let $k$ be an integer such that  $0<\left\vert k\right\vert
\leqslant\varepsilon M$. Then%
\[
\operatorname{card}\left\{  r=0,\ldots,M/2-1:1-\cos(  4\pi kr/M)
\leqslant1-\cos(  2\pi\varepsilon)  \right\}  \leqslant2\varepsilon M .
\]
\end{lemma}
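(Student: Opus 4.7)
The plan is to translate the cosine inequality into a ``distance to integer'' condition, and then bound the number of valid $r$ via a standard lattice point argument modulo $L:=M/2$.

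First, using the double-angle identity $1-\cos(2\theta)=2\sin^{2}\theta$, I would rewrite the condition $1-\cos(4\pi kr/M)\leqslant 1-\cos(2\pi\varepsilon)$ as $|\sin(2\pi kr/M)|\leqslant \sin(\pi\varepsilon)$, which is in turn equivalent to $\operatorname{dist}(2kr/M,\mathbb{Z})\leqslant \varepsilon$. Setting $L=M/2$, this is the same as saying that there exists $j\in\mathbb{Z}$ with $|kr-jL|\leqslant \varepsilon L$, i.e.\ the residue of $kr$ modulo $L$ lies in $[0,\varepsilon L]\cup[L-\varepsilon L,L)$.

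Next, let $d=\gcd(k,L)$ and write $k=dk'$, $L=dL'$ with $\gcd(k',L')=1$. The map $r\mapsto kr \bmod L$ from $\{0,\ldots,L-1\}$ into $\mathbb{Z}/L\mathbb{Z}$ has image exactly the subgroup of multiples of $d$ (of cardinality $L'$), and each value is attained exactly $d$ times. To see this, decompose $r=qL'+r'$ with $q\in\{0,\ldots,d-1\}$ and $r'\in\{0,\ldots,L'-1\}$; then $kr\equiv dk'r'\pmod{L}$, and $k'$ is invertible modulo $L'$. The number of multiples of $d$ inside $[0,\varepsilon L]\cup[L-\varepsilon L,L)$ is at most $2\lfloor \varepsilon L/d\rfloor+1$ (no double counting, since $\varepsilon<1/2$ implies $2\varepsilon L'<L'$). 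Multiplying by the multiplicity $d$, the number of $r\in\{0,\ldots,L-1\}$ satisfying the original condition is at most $d(2\varepsilon L/d+1)=2\varepsilon L+d$.

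Finally, the hypothesis $|k|\leqslant \varepsilon M=2\varepsilon L$ yields $d=\gcd(k,L)\leqslant |k|\leqslant 2\varepsilon L$, hence the count is bounded by $2\varepsilon L+2\varepsilon L=4\varepsilon L=2\varepsilon M$, as claimed. The only real subtlety, and the unique place where the hypothesis $|k|\leqslant \varepsilon M$ enters, is this last step: the additive error $d$ coming from the lattice count must be absorbed into the main term, which is precisely what the bound $d\leqslant |k|\leqslant \varepsilon M$ permits.
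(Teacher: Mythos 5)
Your proof is correct and follows essentially the same strategy as the paper's: both reduce the cosine condition to the statement that $kr$ modulo $L=M/2$ lands in a window of total length $\varepsilon M$, exploit that $r\mapsto kr\bmod L$ is exactly $\gcd(k,L)$-to-one onto the subgroup of multiples of $\gcd(k,L)$, and absorb the additive error $\gcd(k,L)$ via $\gcd(k,L)\leqslant|k|\leqslant\varepsilon M$. The paper phrases the counting geometrically on the unit circle via the cyclic group generated by $\exp(2\pi i k/(M/2))$, whereas you count residues directly and supply the $d$-to-one verification that the paper takes as well known; this is a cosmetic, not a substantive, difference.
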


\begin{proof}
Let $D=\gcd(  k,M/2)  $. Let us show that the set%
\[
\left\{  \exp\Big(2\pi i\frac{k}{M/2}r\Big)\right\}  _{r=0}^{M/2-1}%
\]
contains $\frac{M/2}{D}$ elements and that every value of the list repeats $D$ times. Let
$z=\exp(2\pi i\frac{1}{M/2})$. It is well known that $z$ generates a cyclic
group of order $M/2$ and that $z^{k}$ generates a cyclic group of $\frac
{M/2}{D}$ elements. Let us denote this cyclic group by $G$. The elements of
this group are evenly distributed in the unit circle and separated by the angle
 $
\frac{4\pi D}{M}$. For a fixed $\varepsilon>0$
we have%
\[
\operatorname{card}\{  g\in G:\left\vert \arg(  g)
\right\vert \leqslant2\pi\varepsilon\}  \leqslant\frac{4\pi\varepsilon
}{\frac{4\pi D}{M}}+1=\frac{\varepsilon M}{D}+1.
\]
Since every element $\exp(2\pi i\frac{k}{M/2}r)$ repeats $D$ times, it follows that%
\begin{align*}
&  \operatorname{card}\left\{  r=0,\ldots, M/2-1:\left\vert \arg\bigg(  \exp\Big(2\pi
i\frac{k}{M/2}r\Big)\bigg)  \right\vert \leqslant2\pi\varepsilon\right\}    \leqslant D\left(  \frac{\varepsilon M}{D}+1\right)  =\varepsilon M+D.
\end{align*}
Since $\left\vert k\right\vert \leqslant\varepsilon M$, we have $D\leqslant
\varepsilon M$ and therefore%
\[
\operatorname{card}\left\{  r=0,\ldots, M/2-1:\left\vert \arg\bigg(  \exp\Big(2\pi
i\frac{k}{M/2}r\Big)\bigg)  \right\vert \leqslant2\pi\varepsilon\right\}
\leqslant2\varepsilon M.
\]
\end{proof}

Using the above lemma, we provide the following estimate for the sum in the right-hand side of \eqref{eq:somma_trasformata}.

\begin{proposition}\label{prop6}
Let $0<\varepsilon<\frac{1}{8h}$. Let $k=(k_1,\ldots, k_h)\in\mathbb J_M^h\cap [-\varepsilon M,\varepsilon M]^h$ be such that $k_1\cdots k_h\neq 0$.
Then%
\[
\sum_{r=\lfloor M/4\rfloor}^{M/2-1}\prod_{u=1}^{h}\big(  1-\cos(  4\pi k_{u}r/M)
\big)  \geqslant\frac{M}{4}\eta_{h}(  \varepsilon)
\]
with%
\[
\eta_{h}(  \varepsilon)  =\left(  1-8\varepsilon h\right)  \big(
1-\cos(  2\pi\varepsilon)  \big)  ^{h}.
\]

\end{proposition}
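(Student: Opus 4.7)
The plan is to reduce the lower bound on the sum to a counting problem: estimate the number of values of $r$ in the window $\{\lfloor M/4\rfloor,\ldots,M/2-1\}$ on which every factor $1-\cos(4\pi k_u r/M)$ is bounded away from $0$, and then trivially drop the remaining nonnegative contributions.

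First, for each fixed $u\in\{1,\ldots,h\}$, apply Lemma \ref{groups} with the integer $k=k_u$; the hypotheses $0<|k_u|\leqslant\varepsilon M$ and $\varepsilon<1/2$ are satisfied, so the set
\[
B_u=\bigl\{\,r=0,\ldots,M/2-1:\,1-\cos(4\pi k_u r/M)\leqslant 1-\cos(2\pi\varepsilon)\,\bigr\}
\]
has cardinality at most $2\varepsilon M$. Taking the union over $u=1,\ldots,h$, the collection of \emph{bad} indices $B=B_1\cup\cdots\cup B_h$ in $\{0,\ldots,M/2-1\}$ has size at most $2h\varepsilon M$.

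Next, restrict to the window $W=\{\lfloor M/4\rfloor,\ldots,M/2-1\}$, which has $M/2-\lfloor M/4\rfloor\geqslant M/4$ elements. Removing at most $2h\varepsilon M$ bad indices, the set of \emph{good} indices $G=W\setminus B$ satisfies
\[
\operatorname{card}(G)\;\geqslant\;\frac{M}{4}-2h\varepsilon M\;=\;\frac{M}{4}\bigl(1-8h\varepsilon\bigr),
\]
which is strictly positive thanks to the hypothesis $\varepsilon<1/(8h)$. On each $r\in G$, every factor of the product is at least $1-\cos(2\pi\varepsilon)$ by the very definition of $B_u$, so
\[
\prod_{u=1}^{h}\bigl(1-\cos(4\pi k_u r/M)\bigr)\;\geqslant\;\bigl(1-\cos(2\pi\varepsilon)\bigr)^{h}.
\]

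Finally, since all terms of the sum are nonnegative, dropping the indices in $W\setminus G$ yields
\[
\sum_{r=\lfloor M/4\rfloor}^{M/2-1}\prod_{u=1}^{h}\bigl(1-\cos(4\pi k_u r/M)\bigr)\;\geqslant\;\operatorname{card}(G)\,\bigl(1-\cos(2\pi\varepsilon)\bigr)^{h}\;\geqslant\;\frac{M}{4}\,\eta_h(\varepsilon),
\]
as claimed. There is no real obstacle here: the only point requiring attention is that the length $\approx M/4$ of the summation window be large compared with the total size $2h\varepsilon M$ of the bad set, which is exactly what the assumption $\varepsilon<1/(8h)$ guarantees and which forces the specific shape $1-8h\varepsilon$ appearing in $\eta_h(\varepsilon)$.
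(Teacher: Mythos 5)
Your proof is correct and follows exactly the paper's approach: invoke Lemma \ref{groups} for each coordinate $k_u$ to bound the size of the bad set, take the union bound over $u=1,\ldots,h$, subtract from the window $\{\lfloor M/4\rfloor,\ldots,M/2-1\}$, and bound each product from below on the remaining good indices. The only cosmetic difference is that you define the bad sets over the full range $\{0,\ldots,M/2-1\}$ and intersect with the window afterward, while the paper defines the good sets $W_u$ directly inside the window, but the counting is identical.
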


\begin{proof}
For every $u=1,\ldots,h$, let
\[
W_{u}=\left\{  r=\lfloor M/4\rfloor,\ldots,M/2-1:1-\cos(  4\pi k_{u}r/M)
>1-\cos(  2\pi\varepsilon)  \right\}
\]
and let
$
W=\bigcap\limits_{u=1}^{h}
W_{u}$.
Since $W^{c}=
\bigcup\limits_{u=1}^{h}
W_{u}^{c}$, by Lemma \ref{groups} we have 
\[
\operatorname{card}W^{c}\leqslant 2\varepsilon Mh 
\]
and therefore%
\[
\operatorname{card}W\geqslant\frac{M}{2}-\left\lfloor\frac{M}{4}\right\rfloor-2\varepsilon Mh\geqslant\frac{M}%
{4}(  1-8\varepsilon h).
\]
Hence%
\begin{align*}
\sum_{r=\lfloor M/4\rfloor}^{M/2-1}\prod_{u=1}^{h}\bigg(  1-\cos\Big(  2\pi\frac{k_{u}%
r}{M/2}\Big)  \bigg)   &  \geqslant\sum_{r\in W}\prod_{u=1}^{h}\bigg(
1-\cos\Big(  2\pi\frac{k_{u}r}{M/2}\Big)  \bigg)  \\
&  \geqslant\sum_{r\in W}\big(  1-\cos(  2\pi\varepsilon)
\big)  ^{h}\geqslant\frac{M}{4}(  1-8\varepsilon h)  \big(
1-\cos(  2\pi\varepsilon) \big)  ^{h}.
\end{align*}

\end{proof}

Finally, we have all the ingredients to estimate \eqref{discrete-plancherel}.

\begin{proposition}\label{prop7}
Let $0<\varepsilon<\frac{1}{8d}$. 
Let $k\neq 0$ such that $k=(k_1,\ldots,k_d)\in \mathbb J_M^d\cap [-\varepsilon M,\varepsilon M]^d$.
Then
 \[\sum_{r=1}^{M/2-1}\left\vert \widehat{\chi}_{-Q\left(  r/M\right)  }(k) \right\vert ^{2}\geqslant 2\frac{4^d\eta_{d}(
\varepsilon)  }{\pi^{2d}\big(  \prod_{u=1}^{d}%
\max(  1, |k_{u}|)  \big)  ^{2}}\left\lfloor \frac{M}{4}\right\rfloor^{2d+1},\]
with 
\[
\eta_{d}(  \varepsilon)  =\left(  1-8\varepsilon d\right)  \big(
1-\cos(  2\pi\varepsilon)  \big)  ^{d}.
\]
\end{proposition}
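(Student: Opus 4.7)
The strategy is to combine Lemma \ref{lem:Somma Trasformata}, Proposition \ref{prop6}, and the elementary inequality $|\sin x|\leqslant |x|$. After permuting coordinates we may assume $k_1\cdots k_h\neq 0$ and $k_{h+1}=\cdots=k_d=0$ for some $1\leqslant h\leqslant d$ (which is possible since $k\neq 0$). Lemma \ref{lem:Somma Trasformata} then writes the target sum as
\[
\frac{4^{d}}{2^{3h}\prod_{u=1}^{h}\sin^{2}(\pi k_{u}/M)}\sum_{r=1}^{M/2-1}r^{2d-2h}\prod_{u=1}^{h}\big(1-\cos(4\pi k_{u}r/M)\big).
\]
Since $0<\varepsilon<1/(8d)\leqslant 1/(8h)$, Proposition \ref{prop6} applies. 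I will restrict the $r$-sum to $r\in\{\lfloor M/4\rfloor,\ldots,M/2-1\}$, on which $r^{2d-2h}\geqslant \lfloor M/4\rfloor^{2d-2h}$ (the exponent is nonnegative), so that the inner double product is bounded below by $\lfloor M/4\rfloor^{2d-2h}\cdot(M/4)\eta_h(\varepsilon)\geqslant \lfloor M/4\rfloor^{2d-2h+1}\eta_h(\varepsilon)$.

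For the denominator, the hypothesis gives $|\pi k_u/M|\leqslant \pi\varepsilon<\pi/2$, so $|\sin x|\leqslant|x|$ yields $\sin^{2}(\pi k_u/M)\leqslant \pi^{2}k_u^{2}/M^{2}$, hence
\[
\frac{1}{\prod_{u=1}^{h}\sin^{2}(\pi k_u/M)}\geqslant\frac{M^{2h}}{\pi^{2h}\prod_{u=1}^{h}k_u^{2}}.
\]
Using $\prod_{u=1}^{d}\max(1,|k_u|)^{2}=\prod_{u=1}^{h}k_u^{2}$ (the zero coordinates contribute $1$), this already exhibits the correct $k$-dependence of the claimed bound. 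Converting $M^{2h}$ to $\lfloor M/4\rfloor^{2h}$ via $M\geqslant 4\lfloor M/4\rfloor$ adds a factor $4^{2h}=2^{4h}$, and putting everything together gives the lower estimate
\[
\frac{4^{d}\cdot 2^{h}\,\eta_h(\varepsilon)}{\pi^{2h}\prod_{u=1}^{h}k_u^{2}}\,\lfloor M/4\rfloor^{2d+1}.
\]

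It remains to compare this with the claimed bound, which amounts to checking that $\frac{2^{h}\pi^{2(d-h)}\eta_h(\varepsilon)}{2\,\eta_d(\varepsilon)}\geqslant 1$ uniformly in $h\in\{1,\ldots,d\}$. Since $h\leqslant d$, one has $1-8\varepsilon h\geqslant 1-8\varepsilon d$, and $(1-\cos(2\pi\varepsilon))^{h-d}\geqslant 1$ because $1-\cos(2\pi\varepsilon)\leqslant 1$, so $\eta_h(\varepsilon)\geqslant\eta_d(\varepsilon)$; and $2^{h}\pi^{2(d-h)}\geqslant 2$ for every $1\leqslant h\leqslant d$, as one checks separately for $h=d$ (where $2^{d}\geqslant 2$) and for $h<d$ (where already $\pi^{2(d-h)}\geqslant \pi^{2}>2$). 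The main obstacle is this final bookkeeping: one must make sure that the factor $2^{4h}$ gained from the $M\mapsto\lfloor M/4\rfloor$ conversion strictly dominates the $2^{3h}$ denominator of Lemma \ref{lem:Somma Trasformata} for all relevant $h$, which is precisely what produces the surviving $2^{h}$ controlling the ratio.
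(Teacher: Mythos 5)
Your proof is correct and follows essentially the same route as the paper's: apply Lemma \ref{lem:Somma Trasformata}, restrict the $r$-sum to $r\geqslant\lfloor M/4\rfloor$ so that Proposition \ref{prop6} applies, bound $\sin^2(\pi k_u/M)$ by $\pi^2 k_u^2/M^2$, and convert the surviving powers of $M$ into $\lfloor M/4\rfloor^{2d+1}$. The only difference is cosmetic: you spell out the final comparison $2^{h}\pi^{2(d-h)}\eta_h(\varepsilon)\geqslant 2\eta_d(\varepsilon)$, which the paper's proof leaves implicit in its last displayed inequality.
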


\begin{proof} Let $h\geqslant 1$ the number of  nonzero components of $k$. Without loss of generality, we can assume that these are $k_1,\ldots,k_h$. We may therefore apply Lemma \ref{lem:Somma Trasformata} and Proposition \ref{prop6},
\begin{align*}
\sum_{r=1}^{M/2-1}\left\vert \widehat{\chi}_{-Q\left(  r/M\right)  }(k)  \right\vert ^{2}&=\frac{4^{d}}{2^{3h}\prod_{u=1}^{h}%
\sin^{2}(  \pi k_{u}/M)  }\sum_{r=1}^{M/2-1}r^{2d-2h}\prod
_{u=1}^{h}\big(  1-\cos(  4\pi k_{u}r/M)  \big)  \\
& \geqslant\frac{4^{d}}{2^{3h}\prod_{u=1}^{h}%
\sin^{2}(  \pi k_{u}/M)  }\left\lfloor \frac{M}{4}\right\rfloor^{2d-2h}\sum_{r=\lfloor M/4\rfloor}^{M/2-1}\prod
_{u=1}^{h}\big(  1-\cos(  4\pi k_{u}r/M)  \big)  \\
& \geqslant\frac{4^{d}}{2^{3h}\prod_{u=1}^{h}%
\sin^{2}\left(  \pi k_{u}/M\right)  }\left\lfloor \frac{M}{4}\right\rfloor^{2d-2h} \frac{M}{4}\eta_{h}(  \varepsilon)\\
& \geqslant\frac{4^{d}}{2^{-h}\prod_{u=1}^{h}%
\big(M\sin(  \pi k_{u}/M)  \big)^2}\left\lfloor \frac{M}{4}\right\rfloor^{2d+1} \eta_{h}(  \varepsilon).
\end{align*}
Since%
\[
M|\sin(  \pi k_{u}/M) | \leqslant\pi |k_{u}|,
\]
we obtain%
\begin{align*}
\sum_{r=1}^{M/2-1}\left\vert \widehat{\chi}_{-Q\left(  r/M\right)  }(k) \right\vert ^{2}&\geqslant \frac{4^{d}\eta_{h}(  \varepsilon)}{2^{-h}\big(\prod_{u=1}^{h}%
\pi k_{u}\big)^2}\left\lfloor \frac{M}{4}\right\rfloor^{2d+1} \\ &\geqslant2\frac{4^d\eta_{d}(
\varepsilon)  }{\pi^{2d}\big(  \prod_{u=1}^{d}%
\max(  1, |k_{u}|)  \big)  ^{2}}\left\lfloor \frac{M}{4}\right\rfloor^{2d+1}.
\end{align*}
\end{proof}
To be finally able to prove Theorem \ref{teo_square} we need another step. As in the classical proof in \cite{mont}, we need to refine further the estimate in the above proposition. In particular, we want to estimate the term $(\prod_{u=1}^{d}\max(1,k_u))^2$, and this will be done thanks to a pair of intermediate results we now state.

\begin{lemma}\label{log}
For every $t\in\mathbb{R}$ and every positive integer $\ell$, we have%
\[t^{2}\geqslant \left(  \frac{2e}{\ell}\right)
^{\ell}
(  \log_{+}t)  ^{\ell}.
\]
\end{lemma}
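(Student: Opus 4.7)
The claim is trivial when $t\le 1$ (with the convention $\log_+ t=\max(0,\log t)$, the right-hand side vanishes), so I would immediately reduce to the case $t>1$. Setting $u=\log t>0$, the target inequality $t^2\ge (2e/\ell)^\ell u^\ell$ becomes
\[
e^{2u/\ell}\ge \left(\frac{2e}{\ell}\right) u,
\]
after taking $\ell$-th roots. The natural change of variable is $v=2u/\ell$, which converts this into the single clean inequality
\[
e^{v}\ge e v,
\]
for $v>0$. Note how the factor $\ell$ cancels nicely: this is the whole point of packaging the constant as $(2e/\ell)^\ell$.

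Thus the plan reduces to establishing $e^v\ge ev$ for all real $v$, which I would do by a standard one-line calculus argument: the function $g(v)=e^v-ev$ satisfies $g'(v)=e^v-e$, so $g$ has a unique critical point at $v=1$; since $g''(v)=e^v>0$, this critical point is a global minimum, and $g(1)=0$. Therefore $g(v)\ge 0$ for all $v\in\mathbb{R}$, which is exactly $e^v\ge ev$, with equality only at $v=1$.

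Unwinding the substitutions, we obtain the desired bound for every $t>1$, and the trivial case $t\le 1$ completes the proof. There is no substantial obstacle here; the only thing worth flagging is the bookkeeping with $\log_+$ versus $\log$, and the choice of substitution $v=2u/\ell$ that decouples the inequality from the parameter $\ell$. The constant $2e/\ell$ is precisely the one for which equality is attained (at $t=e^{\ell/2}$), so the bound in the lemma is sharp.
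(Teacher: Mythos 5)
Your proof is correct and complete. The paper dismisses this lemma as ``a trivial computation'' and gives no details, so your argument --- reduce to $t>1$, take $\ell$-th roots, substitute $v=2u/\ell$ to reach the standard inequality $e^v\geqslant ev$ --- is exactly the kind of routine verification the authors had in mind, with the added bonus of identifying the sharpness case $t=e^{\ell/2}$.
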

\begin{proof}
This follows from the fact that the function $e^{x}/x^\ell$ has its minimum at $x=\ell$. 
\end{proof}

Let $p$ be a positive integer to be chosen later and for $x_1,\ldots,x_{d-1}>0$ let
\[
R_{\left(  x_{1},\ldots,x_{d-1}\right)  }=[-x_{1},x_{1}]\times\cdots
\times\left[  -x_{d-1},x_{d-1}\right]  \times\left[  -\frac{p}{x_{1}\cdots
x_{d-1}},\frac{p}{x_{1}\cdots x_{d-1}}\right]  .
\]

\begin{proposition}\label{int}
For every $k\in\mathbb{J}_M^d$,
\begin{align*}
&  \int_{1}^{p}\int_1^{\frac{p}{x_1}}\cdots\int_{1}%
^{\frac{p}{x_{1}\cdots x_{d-2}}}\chi_{R_{(  x_{1},\ldots,x_{d-1})
}}(k)\frac{dx_{d-1}}{x_{d-1}}\cdots\dfrac{dx_2}{x_2}\dfrac{dx_{1}}{x_{1}}\\
&  \qquad\qquad =\frac{1}{(  d-1)  !}\Big(  \log_{+}\frac{p}{\max(
1,\left\vert k_{1}\right\vert )  \cdots\max(  1,\left\vert
k_{d}\right\vert )  }\Big)  ^{d-1}.
\end{align*}
\end{proposition}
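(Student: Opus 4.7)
The plan is to reduce the iterated integral to the volume of a simplex via a logarithmic change of variables. First I would set $a_j=\max(1,|k_j|)$ for $j=1,\ldots,d$ and identify the effective region of integration. The condition $\chi_{R_{(x_1,\ldots,x_{d-1})}}(k)=1$ is equivalent to $x_j\geqslant|k_j|$ for $j=1,\ldots,d-1$ together with $x_1\cdots x_{d-1}\leqslant p/|k_d|$ (which is vacuous when $k_d=0$, and in every case can be written as $x_1\cdots x_{d-1}\leqslant p/a_d$). On the other hand, the iterated bounds $1\leqslant x_j\leqslant p/(x_1\cdots x_{j-1})$ are easily seen to describe exactly the set $\{x_j\geqslant 1 \text{ for all }j,\ x_1\cdots x_{d-1}\leqslant p\}$, since with $x_k\geqslant 1$ for $k>j$ the outer bound $x_1\cdots x_{d-1}\leqslant p$ implies each partial bound $x_1\cdots x_j\leqslant p$. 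Combining, the effective region of integration becomes
\[
\Omega=\bigl\{(x_1,\ldots,x_{d-1}):x_j\geqslant a_j \text{ for }j=1,\ldots,d-1,\ x_1\cdots x_{d-1}\leqslant p/a_d\bigr\}.
\]

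Next I would apply the substitution $y_j=\log(x_j/a_j)$, so that $dx_j/x_j=dy_j$ and $\Omega$ is mapped onto the set
\[
\Sigma=\bigl\{(y_1,\ldots,y_{d-1}):y_j\geqslant 0,\ y_1+\cdots+y_{d-1}\leqslant L\bigr\},
\]
where $L=\log(p/(a_1\cdots a_d))$. If $a_1\cdots a_d>p$ then $\Omega$ is empty, the integral is zero, and the right-hand side also vanishes because $\log_+(p/(a_1\cdots a_d))=0$; otherwise $L\geqslant 0$ and $\Sigma$ is the standard $(d-1)$-simplex of side $L$, which has volume $L^{d-1}/(d-1)!$. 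In both cases the integral equals $\bigl(\log_+(p/(a_1\cdots a_d))\bigr)^{d-1}/(d-1)!$, which is the claim since $a_j=\max(1,|k_j|)$.

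I do not anticipate any serious obstacle: once the region $\Omega$ is correctly identified, the rest is a textbook change of variables combined with the standard simplex volume formula. The one bookkeeping point deserving care is the equivalence between the iterated description of the domain and the symmetric description $\{x_j\geqslant 1,\ x_1\cdots x_{d-1}\leqslant p\}$, which is what allows the logarithmic substitution to decouple the variables and produce a true simplex.
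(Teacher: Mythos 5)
Your proof is correct, and it takes a genuinely different route from the paper's. The paper evaluates the iterated integral from the inside out: it computes the innermost integral to get a $\log_+$ expression, then establishes and repeatedly applies the induction identity
\[
\int_{1}^{\frac{p}{x_{1}\cdots x_{n-1}}}\chi_{[|k_{n}|,\infty)}(x_{n})\Big(\log_{+}\tfrac{p}{x_{1}\cdots x_{n}\max(1,|k_{n+1}|)\cdots\max(1,|k_{d}|)}\Big)^{d-n-1}\tfrac{dx_{n}}{x_{n}}
=\tfrac{1}{d-n}\Big(\log_{+}\tfrac{p}{x_{1}\cdots x_{n-1}\max(1,|k_{n}|)\cdots\max(1,|k_{d}|)}\Big)^{d-n}.
\]
You instead describe the full effective region $\Omega$ once and for all, pass to logarithmic coordinates $y_j=\log(x_j/a_j)$, and recognize the image as a standard $(d-1)$-simplex of side $L=\log(p/(a_1\cdots a_d))$ with volume $L^{d-1}/(d-1)!$. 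Your argument is cleaner and more conceptual: the symmetry and the $(d-1)!$ become transparent rather than emerging from a telescoping induction, and all the $\log_+$ edge cases are handled in one stroke by noting that $\Omega$ is empty exactly when $a_1\cdots a_d>p$. One thing the paper's inside-out computation buys that yours does not is the intermediate identity displayed above, which the authors then reuse verbatim at the end of the proof of Theorem \ref{teo_square} to evaluate the iterated integral of $\log(p/(x_1\cdots x_{d-2}))$; with your approach that later step would need a separate (though equally short) simplex argument. Your bookkeeping point about reconciling the iterated bounds $1\leqslant x_j\leqslant p/(x_1\cdots x_{j-1})$ with the symmetric set $\{x_j\geqslant 1,\ x_1\cdots x_{d-1}\leqslant p\}$ is correctly identified as the one place needing care, and your justification (using $x_k\geqslant 1$ for $k>j$) is right.
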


\begin{proof}
We have
\begin{align*}
&  \int_{1}^{p}\int_1^{\frac{p}{x_1}}\cdots \int_{1}%
^{\frac{p}{x_{1}\cdots x_{d-2}}}\chi_{R_{(  x_{1},\ldots,x_{d-1})
}}(k)\frac{dx_{d-1}}{x_{d-1}}\cdots\dfrac{dx_2}{x_2}\dfrac{dx_{1}}{x_{1}}\\
&  =\int_{1}^{p}\chi_{\left[  -x_{1},x_{1}\right]  }(k_{1})\int_{1}^{\frac
{p}{x_{1}}}\chi_{\left[  -x_{2},x_{2}\right]  }(k_{2})\cdots\\
&\quad  \times \int_{1}^{\frac
{p}{x_{1}\cdots x_{d-2}}}\chi_{\left[  -x_{d-1},x_{d-1}\right]  }(k_{d-1}%
)\chi_{\big[  -\frac{p}{x_{1}\cdots x_{d-1}},\frac{p}{x_{1}\cdots x_{d-1}%
}\big]  }(k_{d})\frac{dx_{d-1}}{x_{d-1}}\cdots\dfrac{dx_{1}}{x_{1}}\\
&  =\int_{1}^{p}\chi_{[|k_1|,+\infty)
}(x_{1})\int_{1}^{\frac{p}{x_{1}}}\chi_{[|k_2|,+\infty) }(x_{2})\cdots\\
&\quad \times \int_{1}^{\frac{p}{x_{1}\cdots
x_{d-2}}}\chi_{[|k_{d-1}|,+\infty)
}(x_{d-1})\chi_{\big[1,\frac{p}{x_{1}\cdots x_{d-2}\left\vert
k_{d}\right\vert }\big]  }(x_{d-1})\frac{dx_{d-1}}{x_{d-1}}\cdots
\dfrac{dx_{1}}{x_{1}}.%
\end{align*}
When $k_d=0$, we assume that $\big[1,\frac{p}{x_{1}\cdots x_{d-2}\left\vert
k_{d}\right\vert }\big]=[1,+\infty)$. 
We start evaluating the innermost integral. Observe that this can be nonzero only if the three intervals $[|k_{d-1}|,+\infty)$, $\big[1,\frac{p}{x_{1}\cdots x_{d-2}\left\vert
k_{d}\right\vert }\big]$ and $\big[1,\frac{p}{x_{1}\cdots x_{d-2} }\big]$ have nonempty intersection, that is if \[\frac{p}{x_{1}\cdots
x_{d-2}\max(  1,\left\vert k_{d}\right\vert )  }\geqslant
\max(  1,\left\vert k_{d-1}\right\vert )  .\] 
In this case
\begin{align*}
& \int_{1}^{\frac{p}{x_{1}\cdots x_{d-2}}}\chi_{[\left\vert
k_{d-1}\right\vert,+\infty)  }(x_{d-1})\chi_{\big[1,\frac{p}{x_{1}\cdots x_{d-2}\left\vert
k_{d}\right\vert }\big]  }(x_{d-1}%
)\frac{dx_{d-1}}{x_{d-1}}\\
& =\int_{\max\left(  1,\left\vert k_{d-1}\right\vert \right)  }^{\frac
{p}{x_{1}\cdots x_{d-2}\max\left(  1,\left\vert k_{d}\right\vert \right)  }%
}\frac{dx_{d-1}}{x_{d-1}}=\log\frac{p}{x_{1}\cdots x_{d-2}\max(
1,\left\vert k_{d-1}\right\vert )  \max(  1,\left\vert
k_{d}\right\vert )  }.
\end{align*}
Overall, since for  $\frac{p}{x_{1}\cdots x_{d-2}\max(  1,\left\vert
k_{d}\right\vert )  }<\max(  1,\left\vert k_{d-1}\right\vert
)  $ the integral is zero, we obtain
\begin{align*}
& \int_{1}^{\frac{p}{x_{1}\cdots x_{d-2}}}\chi_{[\left\vert
k_{d-1}\right\vert,+\infty)  }(x_{d-1})\chi_{\big[1,\frac{p}{x_{1}\cdots x_{d-2}\left\vert
k_{d}\right\vert }\big]  }(x_{d-1}%
)\frac{dx_{d-1}}{x_{d-1}}\\
& =\log_{+}\frac{p}{x_{1}\cdots x_{d-2}\max(  1,\left\vert k_{d-1}%
\right\vert )  \max(  1,\left\vert k_{d}\right\vert )  }.
\end{align*}
The desired result now follows iterating the following identity. For any $n=1,\ldots,d-2$, 
\begin{align}
& \int_{1}^{\frac{p}{x_{1}\cdots x_{n-1}}}\chi_{[\vert
k_{n}\vert,+\infty)}(x_{n})\Big(  \log_{+}\frac{p}{x_{1}\cdots
x_{n}\max(  1,\left\vert k_{n+1}\right\vert )  \cdots\max(
1,\left\vert k_{d}\right\vert )  }\Big)  ^{d-n-1}\frac{dx_{n}}{x_{n}%
}\nonumber\\
& =\frac{1}{d-n}\Big(  \log_{+}\frac{p}{x_{1}\cdots x_{n-1}\max(
1,\left\vert k_{n}\right\vert )  \cdots\max(  1,\left\vert
k_{d}\right\vert )  }\Big)  ^{d-n}.\label{log^d-n}%
\end{align}
(In our notation when $n=1$ we understand that $p/(x_1\cdots x_{n-1})=p$).

To prove \eqref{log^d-n}, observe that if we let
\[
A=\frac{p}{x_{1}\cdots x_{n-1}\max\left(  1,\left\vert k_{n+1}\right\vert
\right)  \cdots\max\left(  1,\left\vert k_{d}\right\vert \right)  },
\]
then $A\leqslant\frac{p}{x_{1}\cdots x_{n-1}}$. If $\max(  1,\left\vert
k_{n}\right\vert )  <A$, we have
\begin{align*}
& \int_{1}^{\frac{p}{x_{1}\cdots x_{n-1}}}\chi_{[  \left\vert
k_{n}\right\vert ,+\infty)  }(x_{n})\Big(  \log_{+}\frac{p}{x_{1}\cdots
x_{n}\max(  1,\left\vert k_{n+1}\right\vert )  \cdots\max(
1,\left\vert k_{d}\right\vert )  }\Big)  ^{d-n-1}\frac{dx_{n}}{x_{n}%
}\\
& =\int_{\max\left(  1,\left\vert k_{n}\right\vert \right)  }^{A}\Big(
\log_{+}\frac{A}{x_{n}}\Big)  ^{d-n-1}\frac{dx_{n}}{x_{n}}\\
& =\int_{1}^{\frac{A}{\max\left(  1,\left\vert k_{n}\right\vert \right)  }%
}\left(  \log u\right)  ^{d-n-1}\frac{du}{u}=\frac{1}{d-n}\Big(  \log\frac
{A}{\max(  1,\left\vert k_{n}\right\vert )  }\Big)  ^{d-n}\\
& =\frac{1}{d-n}\Big(  \log\frac{p}{x_{1}\cdots x_{n-1}\max(
1,\left\vert k_{n}\right\vert )  \cdots\max(  1,\left\vert
k_{d}\right\vert )  }\Big)  ^{d-n}.
\end{align*}
On the other hand, if $\max(  1,\left\vert k_{n}\right\vert )  >A$,
the integral is zero. Since in this case%
\[
\frac{p}{x_{1}\cdots x_{n-1}\max(  1,\left\vert k_{n}\right\vert )
\cdots\max(  1,\left\vert k_{d}\right\vert )  }<1,
\]
we obtain (\ref{log^d-n}).
\end{proof}
At last, let us prove Theorem \ref{teo_square}.
\begin{proof}[Proof of Theorem \ref{teo_square}]
By Lemma \ref{conto_importante} and Proposition \ref{prop7},%
\begin{align*}
&\|D_N\|^2_2
 = \frac{1}{M^{2d}(M/2-1)}\sum_{0\neq k
\in\mathbb{J}_M^d}\left|  \sum_{n=1}%
^{N}\exp(  -2\pi i k\cdot M^{-1}z_{n})  \right|
^{2}\sum_{s\in\mathbb{S}_M}\left|  \widehat{\chi}_{-Q(s)  }(
k)  \right|  ^{2}\\
& \geqslant \frac{4^{d+1}\eta_{d}(
\varepsilon) }{M^{2d+1}\pi^{2d}}\left\lfloor \frac{M}{4}\right\rfloor^{2d+1}\sum_{\substack{k\neq0,\\ k\in \mathbb J_M^d\cap [-\varepsilon M,\varepsilon M]^d}}\left|  \sum_{n=1}%
^{N}\exp(  -2\pi i k\cdot M^{-1}z_{n})  \right|
^{2} \frac{1 }{\big(  \prod_{u=1}^{d}%
\max(  1, |k_{u}|)  \big)  ^{2}}.
\end{align*}
By Proposition \ref{int} and Lemma \ref{log},   
\begin{align*}
&  \frac{1 }{  \big(\prod_{u=1}^{d}%
\max(  1, |k_{u}|) \big)^2   }=\frac{1}{p^2}\bigg(\frac{p }{  \prod_{u=1}^{d}%
\max(  1, |k_{u}|)    }\bigg)^2\\
&\quad\geqslant \frac{1}{p^2}\left(\frac{2e}{d-1}\right)^{d-1}\bigg(\log_+\frac{p}{  \prod_{u=1}^{d}%
\max(  1, |k_{u}|    )}\bigg)^{d-1}\\
&\quad=\frac{(d-1)!}{p^2}\left(\frac{2e}{d-1}\right)^{d-1}\int_{1}^{p}\cdots\int_{1}^{\frac{p}{x_{1}\cdots x_{d-2}}}%
\chi_{R_{\left(  x_{1},\ldots,x_{d-1}\right)  }}(k)\frac{dx_{d-1}%
}{x_{d-1}}\cdots\dfrac{dx_{1}}{x_{1}}.
\end{align*}
Therefore, if $M\geqslant 4$ so that $\lfloor M/4 \rfloor\geqslant M/8$ ,
\begin{align*}
\|D_N\|_2^2&\geqslant\frac{4^{d+1}\eta_{d}(
\varepsilon) }{M^{2d+1}\pi^{2d}}\left( \frac{M}{8}\right)^{2d+1}
\sum_{\substack{k\neq0,\\ k\in \mathbb J_M^d\cap [-\varepsilon M,\varepsilon M]^d}}\left\vert
\sum_{n=1}^{N}\exp(  -2\pi ik\cdot M^{-1}z_{n})
\right\vert ^{2}\\
&\qquad\times\frac{(  d-1)  !}{p^{2}}\left(  \frac{2e}{d-1}%
\right)  ^{  d-1  }\int_{1}^{p}\cdots\int_{1}^{\frac{p}{x_{1}\cdots x_{d-2}}}%
\chi_{R_{\left(  x_{1},\ldots,x_{d-1}\right)  }}(k)\frac{dx_{d-1}%
}{x_{d-1}}\cdots\dfrac{dx_{1}}{x_{1}}\\
& \geqslant\frac{\eta_{d}(
\varepsilon)(d-1)! }{2^{3d+2}\pi^{2d}p^2} \left(  \frac
{e}{d-1}\right)  ^{ d-1  }
\int_{1}^{p}\cdots\int_{1}^{\frac{p}{x_{1}\cdots x_{d-2}}}\sum_{\substack{k\neq0,\\ k\in \mathbb J_M^d\cap [-\varepsilon M,\varepsilon M]^d}} \chi_{R_{(  x_{1},\ldots,x_{d-1})  }}(k)\\
&\qquad\times \left\vert
\sum_{n=1}^{N}\exp(  -2\pi ik\cdot M^{-1}z_{n})
\right\vert ^{2}
\frac{dx_{d-1}%
}{x_{d-1}}\cdots\dfrac{dx_{1}}{x_{1}}.
\end{align*}
If $p\leqslant \varepsilon M$, then, in our range of $x_1,\ldots, x_{d-1}$, the box $R_{\left(  x_{1},\ldots,x_{d-1}\right)}$ is contained in $[-\varepsilon M, \varepsilon M]^d$ and therefore, by the classical Cassels-Montgomery inequality (see \cite{mont,BGG, TravagliniBook}),
\begin{align*}
&\sum_{\substack{k\neq0,\\ k\in \mathbb J_M^d\cap [-\varepsilon M,\varepsilon M]^d}} \chi_{R_{\left(  x_{1},\ldots,x_{d-1}\right)  }}(k)\left\vert
\sum_{n=1}^{N}\exp(  -2\pi ik\cdot M^{-1}z_{n})
\right\vert ^{2}\\
&\qquad \geqslant Nx_1 x_2\cdots x_{d-1}\frac{p}{x_{1}x_2\cdots x_{d-1}}-N^2=pN-N^2.
\end{align*}
Hence, 
\begin{align*}
\|D_N\|^2_2 & 
\geqslant\frac{\eta_{d}(
\varepsilon)(d-1)! }{2^{3d+2}\pi^{2d}p^2} \left(  \frac
{e}{d-1}\right)  ^{ d-1  }
\int_{1}^{p}\cdots\int_{1}^{\frac{p}{x_{1}\cdots x_{d-2}}} (pN-N^2)
\frac{dx_{d-1}%
}{x_{d-1}}\cdots\dfrac{dx_{1}}{x_{1}}\\
& =\frac{\eta_{d}(
\varepsilon)(d-1)! }{2^{3d+2}\pi^{2d}p^2}\left(  \frac{e}{d-1}\right)  ^{d-1}(
pN-N^{2}) \\
&\quad \times\int_{1}^{p}\cdots\int_{1}^{\frac{p}{x_{1}\cdots x_{d-3}}%
}\log\Big(  \frac{p}{x_{1}\cdots x_{d-2}}\Big)  \frac{dx_{d-2}}{x_{d-2}%
}\cdots\dfrac{dx_{1}}{x_{1}}\\
& =\frac{\eta_{d}(
\varepsilon) }{2^{3d+2}\pi^{2d}p^2}\left(
\frac{e}{d-1}\right)  ^{d-1}(  pN-N^{2})  \log(  p)
^{d-1},
\end{align*}
where the last equality follows applying repeatedly \eqref{log^d-n}.

We now set $p=2N\leqslant \varepsilon M$. Then%
\[
\|D_N\|_2^2\geqslant \frac{\eta_{d}(
\varepsilon) }{2^{3d+4}\pi^{2d}}\left(
\frac{e}{d-1}\right)  ^{d-1}\log(  2N)
^{d-1}.
\] 
Setting $\varepsilon=\frac{1}{9d}$, gives the constraint 
$
M\geqslant18d\,N$.
\end{proof}

\section{Proof of the lower bound for balls}
In this last section we prove the bound for the discrepancy for balls. Unlike the case for the squares, we need a little bit of extra care when dealing with points $p_n$'s that are not on the lattice. Similarly to what we did for Theorem \ref{teo_square}, we prove a pair of preliminary lemmas before actually proving Theorem \ref{teo_balls}. To simplify the notation for the rest of the section we set $D_N(x,r)=\mathscr D(x+B_r,\mathcal P_n)$.

\begin{lemma}\label{lem:plancherel_ball}
Given any finite sequence $\mathcal P_N=\{p_n\}_{n=1}^N$ there exists a unique decomposition $p_n=\widetilde p_n+q_n$ where $\widetilde{p}_n\in \mathbb{T}^d\cap M^{-1}\mathbb{Z}^d$ and $q_n\in \left[-\frac{1}{2M},\frac{1}{2M}\right)^d$. Then, for every $0<r< 1/4$ and for every $k\in\mathbb J_M^d$, the discrete Fourier transform of the function $D_N(\cdot, r)$ is given by
\begin{equation}\label{eq:fourier_ball}
\widehat D_N(k,r)= \sum_{n=1}^N \exp(-2\pi ik\cdot\widetilde p_n)\widehat \chi_{-B_r+q_n}(k)-|B_r|NM^d \delta_0(k). 
\end{equation}
It follows that
\begin{equation}\label{eq:plancherel_ball}
\frac{1}{M^{d}}\sum_{j\in\mathbb{J}_M^d}\left\vert D_{N}(M^{-1}j,r)  \right\vert ^{2}   \geqslant\frac{1}{M^{2d}}\sum_{0\neq k\in\mathbb{J}_M^d}\left\vert \sum
_{n=1}^{N}\exp(-2\pi ik\cdot \widetilde{p}_n)\widehat{\chi
}_{-B_{r}+q_n}(k)\right\vert ^{2}.
\end{equation}
\end{lemma}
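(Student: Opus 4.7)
The plan is to follow the computation of Lemma \ref{FT}, this time tracking the sub-lattice displacement $q_n = p_n - \widetilde p_n$ that appears because the points $p_n$ need no longer lie on $M^{-1}\mathbb Z^d$. For the decomposition: the half-open cubes $\widetilde p + [-\tfrac{1}{2M}, \tfrac{1}{2M})^d$, as $\widetilde p$ ranges over $M^{-1}\mathbb Z^d \cap \mathbb T^d$, tile $\mathbb T^d$, so every $p_n$ belongs to exactly one of them; this assigns uniquely $\widetilde p_n \in M^{-1}\mathbb Z^d \cap \mathbb T^d$ and $q_n \in [-\tfrac{1}{2M}, \tfrac{1}{2M})^d$ with $p_n = \widetilde p_n + q_n$.

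Next I compute $\widehat D_N(k, r)$ directly. Because of the identity (modulo $\mathbb Z^d$)
\[
\chi_{M^{-1}m + B_r}(p_n) = \chi_{B_r}(p_n - M^{-1}m) = \chi_{-B_r + q_n}(M^{-1}m - \widetilde p_n),
\]
and since $\sum_{m \in \mathbb J_M^d}\exp(-2\pi i k \cdot M^{-1}m) = M^d \delta_0(k)$ handles the $N|B_r|$ term, it suffices to evaluate, for each fixed $n$,
\[
\sum_{m \in \mathbb J_M^d}\chi_{-B_r + q_n}(M^{-1}m - \widetilde p_n)\exp(-2\pi i k \cdot M^{-1}m).
\]
Substituting $m = m' + M\widetilde p_n$ (legitimate since $M\widetilde p_n \in \mathbb Z^d$) kills the $\widetilde p_n$ inside the $\chi$ factor and extracts a factor $\exp(-2\pi i k \cdot \widetilde p_n)$; the new range $m' \in \mathbb J_M^d - M\widetilde p_n$ can then be swapped with $\mathbb J_M^d$ because both the $\chi$ factor (pulled back from $\mathbb T^d$) and the exponential (since $k \in \mathbb Z^d$) are $M$-periodic coordinatewise in $m'$. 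The sum collapses to $\exp(-2\pi i k \cdot \widetilde p_n)\widehat\chi_{-B_r + q_n}(k)$, giving \eqref{eq:fourier_ball} after summation over $n$. Note that $r < 1/4$ together with $q_n \in [-\tfrac{1}{2M}, \tfrac{1}{2M})^d$ ensures that $-B_r + q_n$ sits inside a single fundamental cube, so $\widehat\chi_{-B_r + q_n}$ is a legitimate discrete Fourier transform in the sense of this section.

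Finally I apply the Plancherel identity \eqref{Plancherel} to $F = D_N(\cdot, r)$ and discard the $k = 0$ contribution:
\[
\frac{1}{M^d}\sum_{j \in \mathbb J_M^d}|D_N(M^{-1}j, r)|^2 = \frac{1}{M^{2d}}\sum_{k \in \mathbb J_M^d}|\widehat D_N(k, r)|^2 \geqslant \frac{1}{M^{2d}}\sum_{0 \neq k \in \mathbb J_M^d}|\widehat D_N(k, r)|^2.
\]
For $k \neq 0$ the Kronecker delta in \eqref{eq:fourier_ball} vanishes, yielding \eqref{eq:plancherel_ball}. The only mildly subtle step is the translation of the summation index, justified by the two periodicities noted above and ultimately relying on $r < 1/4$; everything else adapts the argument of Lemma \ref{FT} almost word for word.
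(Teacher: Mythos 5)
Your proof is correct and mirrors the paper's argument: the same rewriting $\chi_{M^{-1}m+B_r}(p_n)=\chi_{-B_r+q_n}\big(M^{-1}(m-M\widetilde p_n)\big)$, the same index shift $m\mapsto m+M\widetilde p_n$ to extract the phase factor $\exp(-2\pi ik\cdot\widetilde p_n)$, and the same application of Plancherel followed by dropping the $k=0$ term. You simply supply a bit more explicit justification (the tiling argument for uniqueness of the decomposition, and the double periodicity that legitimizes replacing the shifted index range by $\mathbb J_M^d$) than the paper, which states these steps without comment.
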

\begin{proof}
The existence and the uniqueness of the decomposition $p_n=\widetilde p_n+q_n$ is trivial. Then, by definition, 
\begin{align*}
\widehat{D}_N(  k,r)   &  =\sum_{m \in \mathbb J_M^d}D_N(M^{-1}m,r)  \exp(  -2\pi ik %
\cdot M^{-1}m) \\
&=\sum_{m \in \mathbb J_M^d}\bigg(\sum_{n=1}^{N}\chi_{M^{-1}m+B_r  }(  p_{n})  -N|B_r| \bigg)  \exp(  -2\pi ik %
\cdot M^{-1}m)\\
&=\sum_{m \in \mathbb J_M^d}\bigg(\sum_{n=1}^{N}\chi_{M^{-1}m+B_r -q_n }(  \widetilde{p}_{n})  -N|B_r| \bigg)  \exp(  -2\pi ik %
\cdot M^{-1}m)\\
&  =\sum_{n=1}^{N}\sum_{m \in \mathbb J_M^d}\chi_{-B_r+q_n  }(M^{-1}(m-M\widetilde{p}_n))  \exp(  -2\pi ik\cdot
M^{-1}m)  -|B_r|NM^{d}\delta_0(
k)  \\
&= \sum_{n=1}^{N}\exp(-2\pi ik\cdot\widetilde{p}_n)\sum_{m \in \mathbb J_M^d}\chi_{-B_r+q_n  }(M^{-1}m)  \exp(  -2\pi ik\cdot
M^{-1}m)  -|B_r|NM^{d}\delta_0(
k) \\
&= \sum_{n=1}^{N}\exp(-2\pi ik\cdot\widetilde{p}_n)\widehat{\chi}_{-B_r+q_n}(k)  -|B_r|NM^{d}\delta_0(
k),
\end{align*}
and \eqref{eq:fourier_ball} is proved. To prove \eqref{eq:plancherel_ball} simply observe that, by Plancherel formula \eqref{Plancherel},
\begin{align*}
\frac{1}{M^{d}}\sum_{j\in\mathbb{J}_M^d}\left\vert D_{N}(M^{-1}j,r)  \right\vert ^{2}  &
=\frac{1}{M^{2d}}\sum_{k\in\mathbb{J}_M^d}\vert \widehat{D}_N(k,r) \vert ^{2}\\
&  \geqslant\frac{1}{M^{2d}}\sum_{0\neq k\in\mathbb{J}_M^d}\left\vert \sum
_{n=1}^{N}\exp(-2\pi ik\cdot \widetilde{p}_n)\widehat{\chi
}_{-B_{r}+q_n}(k)\right\vert ^{2}.
\end{align*}
\end{proof}
We now want to further investigate $\widehat \chi_{-B_r+q_n}(k)$. Let us introduce some notation. Let \[
Q_0=\left[-\frac12,\frac12\right)^d\subseteq\mathbb R^d
\] 
and for $\xi\in\mathbb R^d$ set
\[
G(  \xi)  =\int_{\mathbb{R}^d}\chi_{Q_{0}}(x)\exp(-2\pi i\xi\cdot x)dx=\prod_{j=1}^{d}%
\frac{\sin\pi\xi_{j}}{\pi\xi_{j}}.
\]
For $0<r<1/4$ and $q\in \left[-\frac{1}{2M},\frac 1{2M}\right)^d$ set
\[
A_{r,M,q}=\bigcup_{m\in M(-B_r+q)}
(  m+Q_{0})  .
\]
and
\[
E_{r,M,q}=\Big\{x:  \left\vert x\right\vert \leqslant rM+\sqrt
{d}\Big\}  \setminus A_{r,M,q}.
\]
Then, using the above notation, we have the following.
\begin{lemma}\label{lem:fourier_ball}
Let $J_{d/2}$ be the Bessel function of order $d/2$. For every $q\in \left[-\frac{1}{2M},\frac{1}{2M}\right)^d$, for every $0<r< 1/4$ and for every $0\neq k\in\mathbb J_M^d$,
\begin{align*}
\widehat \chi_{-B_r+q}(k)&=\frac{1}{G(  M^{-1}k)  }(  rM+\sqrt{d})
^{\frac{d}{2}}\frac{J_{d/2}\bigg(  2\pi\dfrac{rM+\sqrt{d}}{M}
\left\vert k\right\vert \bigg)  }{(  \left\vert k\right\vert
/M)  ^{\frac{d}{2}}}\\ 
&\quad-\frac{1}{G( M^{-1} k)  }%
\int_{E_{r,M,q}}\exp(2\pi iM^{-1}k\cdot x)dx,
\end{align*}
\end{lemma}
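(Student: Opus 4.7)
The plan is to rewrite the discrete Fourier sum $\widehat\chi_{-B_r+q}(k)$ as a continuous integral of $\exp(-2\pi iM^{-1}k\cdot x)$ over the unit-cube thickening $A_{rM,q}$, by means of the elementary identity
\[
\exp(-2\pi iM^{-1}k\cdot m) \;=\; \frac{1}{G(M^{-1}k)} \int_{m+Q_0} \exp(-2\pi iM^{-1}k\cdot x)\,dx,
\]
which follows directly from the product definition of $G$ after shifting the integration variable. This step requires $G(M^{-1}k)\neq 0$; since $M^{-1}k_j\in[-1/2,1/2)$ for every $k\in\mathbb J_M^d$, no component $M^{-1}k_j$ is a nonzero integer and no factor of $G$ vanishes.

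The first step is to unpack the definition of the discrete Fourier transform. Because the ball is centrally symmetric one has $-B_r=B_r$, so $\chi_{-B_r+q}(M^{-1}m)=1$ is equivalent to $|m-Mq|<rM$. Under the hypotheses $r<1/4$ and $q\in[-1/(2M),1/(2M))^d$, one has $|Mq|\leq\sqrt d/2$, and hence every $m\in\mathbb Z^d$ with $|m-Mq|<rM$ already lies in $\mathbb J_M^d$ (for $M$ large enough in terms of $d$); thus the sum can be regarded as running over all of $\mathbb Z^d$. Applying the displayed identity to each summand and assembling the integrals over the pairwise disjoint unit cubes $m+Q_0$ produces
\[
\widehat\chi_{-B_r+q}(k) \;=\; \frac{1}{G(M^{-1}k)} \int_{A_{rM,q}} \exp(-2\pi iM^{-1}k\cdot x)\,dx.
\]

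Next, by the triangle inequality every $x\in m+Q_0$ with $|m-Mq|<rM$ satisfies $|x|\leq |m-Mq|+|Mq|+\sqrt d/2\leq rM+\sqrt d$, so $A_{rM,q}\subseteq\{|x|\leq rM+\sqrt d\}$; splitting the previous integral accordingly expresses it as the difference of an integral over the centred ball of radius $rM+\sqrt d$ and an integral over $E_{rM,q}$. The first of these is the continuous Fourier transform of the characteristic function of a Euclidean ball, and the standard Bessel-function formula
\[
\int_{\{|x|\leq R\}}\exp(-2\pi i\eta\cdot x)\,dx \;=\; R^{d/2}\,\frac{J_{d/2}(2\pi R|\eta|)}{|\eta|^{d/2}},
\]
applied with $R=rM+\sqrt d$ and $\eta=M^{-1}k$, reproduces the explicit Bessel term in the claim. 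The whole argument is thus a routine manipulation; there is no serious obstacle, the most delicate (but still essentially easy) points being the non-vanishing of $G(M^{-1}k)$ and the geometric containment $A_{rM,q}\subseteq\{|x|\leq rM+\sqrt d\}$.
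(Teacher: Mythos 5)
Your proof follows essentially the same route as the paper's: replace each lattice point contribution by an integral over its unit cube to pull out the factor $G(M^{-1}k)^{-1}$, identify the union of cubes with $A_{rM,q}$, verify $A_{rM,q}\subseteq\{|x|\leqslant rM+\sqrt d\}$, and then split into the ball integral (evaluated by the classical Bessel formula) and the error integral over $E_{rM,q}$. One small observation: your (correct) cube identity yields $\exp(-2\pi iM^{-1}k\cdot x)$ in the resulting integrals, whereas the lemma statement and the paper's proof carry $\exp(+2\pi iM^{-1}k\cdot x)$ in the $E_{rM,q}$-term — a sign slip on the paper's side, harmless in the sequel since only $|R_{k,M,n}(r)|$ is used.
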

\begin{proof}
By definition,
\begin{align*}
\widehat{\chi}_{-B_{r}+q}(k)  = & \sum_{m\in\mathbb{J}_M^d}\chi_{-B_r+q}(M^{-1}m)\exp(-2\pi ik\cdot M^{-1}m)
= \sum_{m\in M(-B_r+q)}\exp(-2\pi ik\cdot M^{-1}m).
\end{align*}
Since
\[
G(  \xi)  =\int_{\mathbb{R}^d}\chi_{Q_{0}}(x)\exp(-2\pi i\xi\cdot x)dx,
\]
then
\[
\int_{\mathbb{R}^d}\chi_{m+Q_{0}}(x)\exp(-2\pi i\xi\cdot x)dx=\exp(2\pi i\xi\cdot m)G(  \xi)  .
\]
It follows that%
\begin{align*}
\widehat{\chi}_{-B_{r}+q}(k)   &  =\sum_{m\in M(-B_r+q)}\exp(-2\pi ik\cdot M^{-1}m)\\
&=\frac{1}{G(  M^{-1}k)  }%
\sum_{m\in M(-B_r+q)}\int_{\mathbb{R}^d}\chi_{m+Q_{0}}(x)\exp(2\pi iM^{-1}k\cdot
x)dx\\
&  =\frac{1}{G(  M^{-1}k)  }\int_{\mathbb{R}^d}\chi_{A_{r,M,q}}(x)\exp(2\pi iM^{-1}k\cdot x)dx.
\end{align*}
Since $A_{r,M,q}\subseteq\Big\{x:  \left\vert x\right\vert \leqslant rM+\sqrt{d}\Big\}  $, we have, by the classical formula for the Fourier transform of the ball (see, e.g, \cite{SW}),
\begin{align*}
\widehat{\chi}_{-B_{r}+q}(k)   &  =\frac{1}{G(  M^{-1}k)
}\int_{\{  \left\vert x\right\vert \leqslant rM+\sqrt
{d}\}  }\exp(2\pi iM^{-1}k\cdot x)dx\\&\quad -\frac{1}{G( M^{-1} )  }%
\int_{E_{r,M,q}}\exp(2\pi iM^{-1}k\cdot x)dx\\
&  =\frac{1}{G(  M^{-1}k)  }(  rM+\sqrt{d})
^{\frac{d}{2}}\frac{J_{d/2}\bigg(  2\pi\dfrac{rM+\sqrt{d}}{M}
\left\vert k\right\vert\bigg)  }{(  \left\vert k\right\vert
/M)  ^{\frac{d}{2}}}\\
& \quad  -\frac{1}{G( M^{-1} k)  }%
\int_{E_{r,M,q}}\exp(2\pi iM^{-1}k\cdot x)dx.
\end{align*}
\end{proof}
We are ready to prove Theorem \ref{teo_balls}. 
\begin{proof}[Proof of Theorem \ref{teo_balls}]
Using the notation of Lemma \ref{lem:fourier_ball}, set 
\[
I_{k,M}(r)=(  rM+\sqrt{d})
^{\frac{d}{2}}\frac{J_{d/2}\bigg(  2\pi\dfrac{rM+\sqrt{d}}{M}
\left\vert k\right\vert\bigg)  }{(  \left\vert k\right\vert
/M)  ^{\frac{d}{2}}}
\]
and
\[R_{k,M,n}(r)=-\int_{E_{r,M,q_n}}\exp(2\pi iM^{-1}k\cdot x)dx.
\]
By Lemma \ref{lem:plancherel_ball} and Lemma \ref{lem:fourier_ball}, since $\left|a+b\right|^2 \geqslant \frac 12 \left|a\right|^2 - \left|b\right|^2 $ and $|G(M^{-1}k)|\leqslant 1$, for any given $1<L<M/2$,
\begin{align*}
&\frac{1}{M^{d}}\sum_{j\in\mathbb{J}_M^d}\left\vert D_{N}(M^{-1}j,r)  \right\vert ^{2}\\ 
&  \geqslant\frac{1}{M^{2d}}\sum_{0<|k|\leqslant L}\Big(\frac{1}{G( M^{-1} k)  }\Big)^2\left\vert \sum
_{n=1}^{N}\exp(-2\pi ik\cdot \widetilde{p}_n)I_{k,M}(r)+\sum
_{n=1}^{N}\exp(-2\pi ik\cdot \widetilde{p}_n)R_{k,M,n}(r)\right\vert ^{2}\\
&\geqslant \frac{1}{2M^{2d}}\sum_{0<|k|\leqslant L}\left|I_{k,M}(r)\right|^2\left\vert \sum
_{n=1}^{N}\exp(-2\pi ik\cdot \widetilde{p}_n)\right\vert^2-\frac{1}{M^{2d}}\sum_{0<|k|\leqslant L}\left\vert\sum 
_{n=1}^{N}\exp(-2\pi ik\cdot \widetilde{p}_n)R_{k,M,n}(r)\right\vert ^{2}\\
& \geqslant \frac{1}{2M^{2d}}\sum_{0<|k|\leqslant L}|I_{k,M}(r)|^2\left\vert \sum
_{n=1}^{N}\exp(-2\pi ik\cdot \widetilde{p}_n)\right\vert^2-c\frac{L^d}{M^{2d}}N^2(rM)^{2d-2},
\end{align*}
where for the last estimate we used that
\[
|R_{k,M,n}(r)|\leqslant |E_{r,M,q_n}|\leqslant c (rM)^{d-1}
\]
uniformly in $n=1,\ldots, N$.
Hence,
\begin{align*}
&\frac{1}{M^{d}}\sum_{j\in\mathbb{J}_M^d}\big(\left\vert D_{N}(M^{-1}j,r)  \right\vert ^{2}+\left\vert D_{N}(M^{-1}j,2r)  \right\vert ^{2}\big)\\
& \geqslant \frac{1}{2M^{2d}}\sum_{0<|k|\leqslant L}\big(|I_{k,M}(r)|^2+|I_{k,M}(2r)|^2\big)\left\vert \sum
_{n=1}^{N}\exp(-2\pi ik\cdot \widetilde{p}_n)\right\vert^2-c\frac{L^d}{M^{2}}N^2r^{2d-2}.
\end{align*}
Now,
\begin{align*}
&  |I_{k,M}(r)|^2+|I_{k,M}(2r)|^2\\
& \geqslant  \left\vert (  rM+\sqrt{d})  ^{\frac{d}{2}}%
\frac{J_{d/2}\left(  2\pi\dfrac{rM+\sqrt{d}}{M}\left\vert
k\right\vert \right)  }{(  \left\vert k\right\vert /M)  ^{\frac{d}{2}}%
}\right\vert ^{2} \quad +\left\vert (  2rM+\sqrt{d})  ^{\frac{d}{2}}%
\frac{J_{d/2}\left(  2\pi\dfrac{2rM+\sqrt{d}}{M}\left\vert
k\right\vert \right)  }{(  \left\vert k\right\vert /M)  ^{\frac{d}{2}}%
}\right\vert ^{2}\\
&  \geqslant\frac{M^{2d}r^{d}}{\left\vert k\right\vert ^{d}}\left[
\left\vert J_{d/2}\bigg(  2\pi\frac{rM+\sqrt{d}}{M}\left\vert
k\right\vert \bigg)  \right\vert ^{2}+\left\vert J_{d/2}\bigg(  2\pi
\frac{2rM+\sqrt{d}}{M}\left\vert k\right\vert \bigg)  \right\vert
^{2}\right]  
\end{align*}
By the asymptotic expansion of Bessel functions we have%
\[
J_{a}(  w)  =\sqrt{\frac{2}{\pi w}}\cos\Big(  w-a\frac{\pi}%
{2}-\frac{\pi}{4}\Big)  +O(  w^{-\frac{3}{2}})  .
\]
Hence,
\[
\left\vert J_{a}(  w)  \right\vert ^{2}=\frac{2}{\pi w}\cos
^{2}\Big(  w-a\frac{\pi}{2}-\frac{\pi}{4}\Big)  +O(  w^{-2})  .
\]
It follows that%
\begin{align*}
&  \left\vert J_{d/2}\Big(  2\pi\frac{rM+\sqrt{d}}{M}\left\vert
k\right\vert \Big)  \right\vert ^{2}+\left\vert J_{d/2}\Big(  2\pi
\frac{2rM+\sqrt{d}}{M}\left\vert k\right\vert \Big)  \right\vert
^{2}\\
&  =\frac{M}{\pi^{2}(  rM+\sqrt{d})  \left\vert
k\right\vert }\cos^{2}\Big(  2\pi\frac{rM+\sqrt{d}}{M}\left\vert
k\right\vert -(  d+1)  \frac{\pi}{4}\Big)  \\
&\quad   +\frac{M}{\pi^{2}(  2rM+\sqrt{d})  \left\vert
k\right\vert }\cos^{2}\Big(  2\pi\frac{2rM+\sqrt{d}}{M}\left\vert
k\right\vert -(  d+1)  \frac{\pi}{4}\Big)  -O(  \left\vert
rk\right\vert ^{-2})  \\
&  \geqslant\frac{c}{r\left\vert k\right\vert }\left[  \cos^{2}\bigg(  \Big(
2\pi r+\frac{2\pi}{M}\sqrt{d}\Big)  \left\vert k\right\vert -(
d+1)  \frac{\pi}{4}\bigg)  +\cos^{2}\bigg(  \Big(  4\pi r+\frac{2\pi
}{M}\sqrt{d}\Big)  \left\vert k\right\vert -(  d+1)  \frac{\pi
}{4}\bigg)  \right]  \\
&  \quad -O(  \left\vert rk\right\vert ^{-2})
\end{align*}
since $rM\geqslant C$ as a consequence of (\ref{M da sotto}).
Let
\[
\omega_{1}=\Big(  2\pi r+\frac{2\pi}{M}\sqrt{d}\Big)  \left\vert
k\right\vert -(  d+1)  \frac{\pi}{4}%
\]
and%
\[
\omega_{2}=\Big(  4\pi r+\frac{2\pi}{M}\sqrt{d}\Big)  \left\vert
k\right\vert -(  d+1)  \frac{\pi}{4}.
\]
We claim that
\begin{equation}
\cos^{2}(  \omega_{1})  +\cos^{2}(  \omega_{2})
\label{somma coseni}%
\end{equation}
is bounded away from zero for $\left\vert k\right\vert <\frac
{1}{10\sqrt{d}}M$. Indeed, suppose that we have both%
\begin{equation}\label{omega_1}
\left\vert \omega_{1}-\Big(  \frac{\pi}{2}+\ell\pi\Big)  \right\vert
\leqslant\varepsilon
\end{equation}
and%
\begin{equation}\label{omega_2}
\left\vert \omega_{2}-\Big(  \frac{\pi}{2}+h\pi\Big)  \right\vert
\leqslant\varepsilon
\end{equation}
for suitable integers $\ell$ and $h$. Since%
\[
\omega_{2}=2\omega_{1}-\frac{2\pi}{M}\sqrt{d}\left\vert k\right\vert +(
d+1)  \frac{\pi}{4},%
\]
we have, by \eqref{omega_1} and \eqref{omega_2},
\begin{align*}
\varepsilon & \geqslant\left\vert 2\omega_{1}-\frac{2\pi}{M}\sqrt{d}\left\vert
k\right\vert +(  d+1)  \frac{\pi}{4}-\left(  \frac{\pi}{2}%
+h\pi\right)  \right\vert \\
& =\left\vert 2\left(  \omega_{1}-\frac{\pi}{2}-\ell\pi\right)  +2\ell\pi
-h\pi-\frac{2\pi}{M}\sqrt{d}\left\vert k\right\vert +\left(  d+3\right)
\frac{\pi}{4}\right\vert \\
& \geqslant\left\vert 2\ell\pi-h\pi+\left(  d+3\right)  \frac{\pi}%
{4}\right\vert -2\varepsilon-\frac{2\pi}{M}\sqrt{d}\left\vert k\right\vert \\
& =\frac{\pi}{4}\left\vert 8\ell-4h+d+3\right\vert -2\varepsilon-\frac{2\pi}%
{M}\sqrt{d}\left\vert k\right\vert .
\end{align*}

Since $d\not \equiv 1\operatorname{mod}4$, we have $\left\vert 8\ell
-4h+d+3\right\vert \geqslant1$.\ Recalling that $\left\vert k\right\vert <\frac
{1}{10\sqrt{d}}M$, we have%
\[
\varepsilon\geqslant\frac{\pi}{4}-2\varepsilon-\frac{2\pi}{M}\sqrt{d}\left\vert
k\right\vert >\frac{\pi}{4}-2\varepsilon-\frac{\pi}{5}>\frac{\pi}%
{20}-2\varepsilon
\]
which is not possible if $\varepsilon$ small enough. Hence 
(\ref{somma coseni}) is bounded away from zero and therefore
\[
\left\vert J_{d/2}\Big(  2\pi\frac{rM+\sqrt{d}}{M}\left\vert
k\right\vert \Big)  \right\vert ^{2}+\left\vert J_{d/2}\Big(  2\pi
\frac{2rM+\sqrt{d}}{M}\left\vert k\right\vert \Big)  \right\vert
^{2}\geqslant\frac{c_{1}}{r\left\vert k\right\vert }-\frac{c_{2}}{(
r\left\vert k\right\vert )  ^{2}}.
\]
It follows that%
\begin{align*}
&  |I_{k,M}(r)|^2+|I_{k,M}(2r)|^2\\
&  \geqslant\frac{M^{2d}r^{d}}{\left\vert k\right\vert ^{d}}\left[
\left\vert J_{d/2}\Big(  2\pi\frac{rM+\sqrt{d}}{M}\left\vert
k\right\vert \Big)  \right\vert ^{2}+\left\vert J_{d/2}\Big(  2\pi
\frac{2rM+\sqrt{d}}{M}\left\vert k\right\vert \Big)  \right\vert
^{2}\right] \\
&\geqslant \frac{M^{2d}r^{d}}{\left\vert k\right\vert ^{d}}\Big( \frac{c_{1}}{r\left\vert k\right\vert }-\frac{c_{2}}{\left(
r\left\vert k\right\vert \right)  ^{2}}\Big) \geqslant c_3\frac{M^{2d}r^{d-1}}{\left\vert k\right\vert ^{d+1}},
\end{align*}
for $\left\vert k\right\vert >\frac{2c_{2}}{  c_{1}r}  $. Let $1<L<\frac
{1}{10\sqrt{d}}M$.
Then, by Cassels-Montgomery inequality (see \cite{mont,BGG, GGbis, TravagliniBook}),%
\begin{align*}
&\frac{1}{M^{d}}\sum_{j\in\mathbb{J}_M^d}\left(\left\vert D_{N}(M^{-1}j,r)  \right\vert ^{2}+\left\vert D_{N}(M^{-1}j,2r)  \right\vert ^{2}\right)\\
& \geqslant \frac{1}{2M^{2d}}\sum_{\frac{2c_{2}}{c_{1}r}
<\left\vert k\right\vert \leqslant L}\big(|I_{k,M}(r)|^2+|I_{k,M}(2r)|^2\big)\left\vert \sum
_{n=1}^{N}\exp(-2\pi ik\cdot \widetilde{p}_n)\right\vert^2-c\frac{L^d}{M^{2}}N^2r^{2d-2}\\
& \geqslant \frac{1}{2M^{2d}}\sum_{\frac{2c_{2}}{c_{1}r}
<\left\vert k\right\vert \leqslant L}\Big(c_3\frac{M^{2d}r^{d-1}}{\left\vert k\right\vert ^{d+1}}\Big)\left\vert \sum
_{n=1}^{N}\exp(-2\pi ik\cdot \widetilde{p}_n)\right\vert^2-c\frac{L^d}{M^{2}}N^2r^{2d-2}\\
&  \geqslant\frac{c_{3}}{2L^{d+1}}r^{d-1}\sum_{\frac{2c_{2}}{  c_{1}r}
<\left\vert k\right\vert \leqslant L}\left\vert \sum_{n=1}^{N}\exp(-2\pi ik\cdot
\widetilde{p}_n)\right\vert ^{2}-c\frac{L^d}{M^{2}}N^2r^{2d-2}\\
&  \geqslant c_{4}\frac{r^{d-1}}{L^{d+1}}\big(  NL^{d}-\frac{c_{5}}{r^{d}}%
N^{2}\big)  -c\frac{L^d}{M^{2}}N^2r^{2d-2}.
\end{align*}
Let $L^{d}=2\frac{c_{5}}{r^{d}}N$. Then%
\begin{align*}
 &\frac{1}{M^{d}}\sum_{j\in\mathbb{J}_M^d}\left(\left\vert D_{N}(M^{-1}j,r)  \right\vert ^{2}+\left\vert D_{N}(M^{-1}j,2r)  \right\vert ^{2}\right) \geqslant C_{1}r^dN^{1-\frac{1}{d}}-C_{2}\frac{N^{3}r^{d-2}}{M^{2}%
} \geqslant \frac{C_1}{2}r^dN^{1-\frac{1}{d}}%
\end{align*}
if $C_{1}r^dN^{1-\frac{1}{d}}\geqslant2C_{2}\frac{N^{3}r^{d-2}}{M^{2}}$, that is%
\[
M\geqslant C_{3}N^{1+\frac{1}{2d}  }r^{-1}.
\]
Observe that this last condition on $M$ assures that the value chosen above for $L$ is compatible with the condition $1<L<\frac{1}{10\sqrt{d}}M$.
\end{proof}

\bibliographystyle{amsalpha}
\bibliography{biblio_Roth_Discreto}

\end{document}